\numberwithin{equation}{section}
  \declaretheorem[name=Theorem,within=section]{theorem}
  \declaretheorem[name=Lemma,sibling=theorem]{lemma}
  \declaretheorem[name=Definition,sibling=theorem,style=definition]{definition}
  \definecolor{ugentblue}{RGB}{36,71,127}
  \definecolor{ugentyellow}{RGB}{250,178,10}
  \definecolor{ugent-we}{RGB}{131,194,236}
\let\etoolboxforlistloop\forlistloop
\let\forlistloop\etoolboxforlistloop
\newcommand\foreign[1]{#1}
\newcommand\xperiod{\@ifnextchar.{}{.\@}}
\newcommand\eg{\foreign{e.g.}\xspace}
\newcommand\ie{\foreign{i.e.}\xspace}
\newcommand\resp{\foreign{resp}\xperiod\xspace}
\newcommand*\dash{\nobreakdash-\hspace{0pt}}
\DeclarePairedDelimiter\abs{\lvert}{\rvert}
\DeclarePairedDelimiterX\Set[2]{\lbrace}{\rbrace}{\,#1\,\delimsize\vert\,#2\,}
\tikzset{
  edge/.style={thick},
  vertex/.style={circle,fill,scale=0.5}
}
  \tikzset{
    edge0/.style={very thick,loosely dotted},
    edge1/.style={thick,dashed},
    vertex0/.style={vertex,diamond},
    vertex1/.style={vertex,regular polygon,regular polygon sides=4}
  }
  \definecolor{darkgreen}{RGB}{0,128,0}
  \tikzset{
    edge0/.style={thick,red},
    edge1/.style={thick,darkgreen},
    vertex0/.style={vertex,red},
    vertex1/.style={vertex,darkgreen}
  }
\title{Generation of Local Symmetry-Preserving Operations on Polyhedra}
\date{}
\author{
Pieter Goetschalckx \\
\footnotesize Ghent University \\
\footnotesize Krijgslaan 281-S9 \\
\footnotesize 9000 Ghent, Belgium \\
\footnotesize \url{pieter.goetschalckx@ugent.be}
\and
Kris Coolsaet \\
\footnotesize Ghent University \\
\footnotesize Krijgslaan 281-S9 \\
\footnotesize 9000 Ghent, Belgium \\
\footnotesize \url{kris.coolsaet@ugent.be}
\and
Nico Van Cleemput \\
\footnotesize Ghent University \\
\footnotesize Krijgslaan 281-S9 \\
\footnotesize 9000 Ghent, Belgium \\
\footnotesize \url{nico.vancleemput@gmail.com}
}
\begin{document}

\maketitle
\begin{tikzpicture}[remember picture,overlay]
  \node at (current page.south) {\begin{minipage}{1.2\textwidth}\centering\doclicenseLongText\end{minipage}};
\end{tikzpicture}
\vspace{-2.5\baselineskip}

\begin{abstract}
  We introduce a new practical and more general definition of local symmetry-preserving operations on polyhedra. These can be applied to arbitrary embedded graphs and result in embedded graphs with the same or higher symmetry. With some additional properties we can restrict the connectivity, \eg when we only want to consider polyhedra. Using some base structures and a list of 10 extensions, we can generate all possible local symmetry-preserving operations isomorph-free.
\end{abstract}

\section{Introduction}

Symmetry-preserving operations on polyhedra have a long history -- from Plato and Archimedes to Kepler \cite{kepler}, Goldberg \cite{goldberg}, Caspar and Klug \cite{casparklug}, Coxeter \cite{coxeter}, Conway \cite{conway}, and many others. Notwithstanding their utility, until recently we had no unified way of defining or describing these operations without resorting to ad-hoc descriptions and drawings. In \cite{Brinkmann2017} the concept of local symmetry-preserving operations on polyhedra (\emph{lsp operations} for short) was introduced. These are operations that are locally defined -- on the \emph{chamber} level, as explained in the next section -- and therefore preserve the symmetries of the polyhedron to which they are applied. This established a general framework in which the class of all lsp operations can be studied, without having to consider individual operations separately. It was shown that many of the most frequently used operations on polyhedra (\eg dual, ambo, truncate, \dots) fit into this framework.

But of course we sometimes do want to examine the operations individually, \eg to check conjectures on as many examples as possible before we try to prove them, or to find operations with certain properties. We can do this for a few operations by hand, but a computer can do this a lot faster, and in a systematic way such that no operations are missed.

In this paper we shall slightly extend the definition of lsp operation so it can be applied to any graph embedded on a compact closed surface\footnote{All graphs in this paper are embedded graphs, and a subgraph has the induced embedding.}, and at the same time provide a reformulation of these operations as decorations, which will turn out to be easier to use in practice.

\section{Decorations and lsp operations}

Every embedded graph $G$ has an associated chamber system $C_G$ \cite{Dress1987}. This chamber system is obtained by constructing a barycentric subdivision of $G$ by adding one vertex in the center of each edge and face of $G$, and edges from each center of a face to its vertices and centers of edges. These vertices can be chosen invariant under the symmetries of $G$. In $C_G$, each vertex has a type that is 0, 1, or 2, indicating the dimension of its corresponding structure in $G$. Each edge has the type of the opposite vertex in the adjacent triangles. In \cref{fig:barycentric}, the chamber system of the plane graph of a cube is given. The original graph consists of the edges of type 2 in the chamber system.

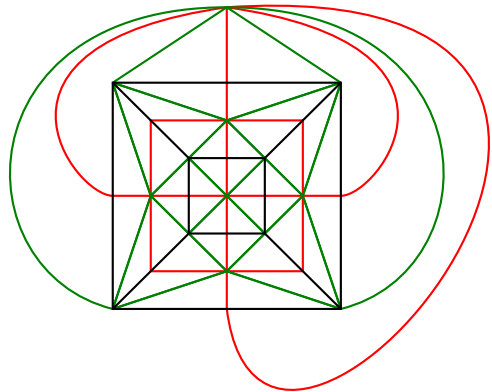
\begin{figure}[htp]
  \centering
  \begin{tikzpicture}[scale=0.5]
    \useasboundingbox (-5,-2.5) rectangle (11,8.5);

    \coordinate (v1) at (0,6);
    \coordinate (v2) at (6,6);
    \coordinate (v3) at (0,0);
    \coordinate (v4) at (6,0);
    \coordinate (v5) at (2,4);
    \coordinate (v6) at (4,4);
    \coordinate (v7) at (2,2);
    \coordinate (v8) at (4,2);

    \coordinate (e1) at (3,6);
    \coordinate (e2) at (6,3);
    \coordinate (e3) at (3,0);
    \coordinate (e4) at (0,3);
    \coordinate (e5) at (3,4);
    \coordinate (e6) at (4,3);
    \coordinate (e7) at (3,2);
    \coordinate (e8) at (2,3);
    \coordinate (e9) at (1,5);
    \coordinate (e10) at (5,5);
    \coordinate (e11) at (5,1);
    \coordinate (e12) at (1,1);

    \coordinate (f0) at (3,8);
    \coordinate (f1) at (3,5);
    \coordinate (f2) at (5,3);
    \coordinate (f3) at (3,1);
    \coordinate (f4) at (1,3);
    \coordinate (f5) at (3,3);

    \draw[edge0] (f0) -- (e1) -- (f1) -- (e5) -- (f5);
    \draw[edge0] (f0) .. controls (10,7) and (7,3) .. (e2) -- (f2) -- (e6) -- (f5);
    \draw[edge0] (f0) .. controls (-4,7) and (-1,3) .. (e4) -- (f4) -- (e8) -- (f5);
    \draw[edge0] (f0) .. controls (18,9) and (4,-8) .. (e3) -- (f3) -- (e7) -- (f5);
    \draw[edge0] (f1) -- (e10) -- (f2) -- (e11) -- (f3) -- (e12) -- (f4) -- (e9) -- (f1);

    \draw[edge1] (f0) -- (v1) (f0) -- (v2)
                 (f0) .. controls (-4,8) and (-4,1) .. (v3)
                 (f0) .. controls (10,8) and (10,1) .. (v4);
    \draw[edge1] (f1) -- (v1) (f1) -- (v2) (f1) -- (v5) (f1) -- (v6);
    \draw[edge1] (f2) -- (v2) (f2) -- (v4) (f2) -- (v6) (f2) -- (v8);
    \draw[edge1] (f3) -- (v3) (f3) -- (v4) (f3) -- (v7) (f3) -- (v8);
    \draw[edge1] (f4) -- (v1) (f4) -- (v3) (f4) -- (v5) (f4) -- (v7);
    \draw[edge1] (f5) -- (v5) (f5) -- (v6) (f5) -- (v7) (f5) -- (v8);
    \draw[edge1] (f1) -- (v1) (f1) -- (v2) (f1) -- (v5) (f1) -- (v6);
    \draw[edge1] (f2) -- (v2) (f2) -- (v4) (f2) -- (v6) (f2) -- (v8);
    \draw[edge1] (f3) -- (v3) (f3) -- (v4) (f3) -- (v7) (f3) -- (v8);
    \draw[edge1] (f4) -- (v1) (f4) -- (v3) (f4) -- (v5) (f4) -- (v7);
    \draw[edge1] (f5) -- (v5) (f5) -- (v6) (f5) -- (v7) (f5) -- (v8);

    \draw[edge] (v1) -- (v2) -- (v4) -- (v3) -- cycle;
    \draw[edge] (v5) -- (v6) -- (v8) -- (v7) -- cycle;
    \draw[edge] (v1) -- (v5) (v2) -- (v6) (v4) -- (v8) (v3) -- (v7);
  \end{tikzpicture}
  \caption{The barycentric subdivision of the plane graph of a cube. Edges of type 0 are \iftoggle{bw}{dotted}{red}, edges of type 1 are \iftoggle{bw}{dashed}{green} and edges of type 2 are \iftoggle{bw}{solid}{black}.}\label{fig:barycentric}
\end{figure}

We use the drawing conventions from \cref{fig:barycentric} for the types of the edges in all figures. Since the vertex types can be deduced from the edge types, we do not display them in the figures.

\begin{definition}\label{def:decoration}
  A \emph{decoration}~$D$ is a 2\dash connected plane graph with vertex set $V$ and edge set $E$, together with a labeling function $t\colon V \cup E \to \{0,1,2\}$, and an outer face which contains vertices~$v_0, v_1, v_2$, such that
  \begin{enumerate}
    \item all inner faces are triangles
    \item for each edge~$e = (v, w)$,~$\{t(e), t(v), t(w)\} = \{0, 1, 2\}$
    \item for each vertex~$v$ with~$t(v) = i$, the types of incident edges are~$j$ and~$k$ with~$\{i, j, k\} = \{0, 1, 2\}$. Two consecutive edges with an inner face in between can not have the same type.
    \item for each inner vertex~$v$
      \begin{align}
        t(v) = 1 \quad&\Rightarrow\quad \operatorname{deg}(v) = 4 \\
        t(v) \neq 1 \quad&\Rightarrow\quad \operatorname{deg}(v) > 4
      \end{align}
      for each vertex~$v$ in the outer face and different from $v_0, v_1, v_2$
      \begin{align}
        t(v) = 1 \quad&\Rightarrow\quad \operatorname{deg}(v) = 3 \\
        t(v) \neq 1 \quad&\Rightarrow\quad \operatorname{deg}(v) > 3
      \end{align}
      and
      \begin{align}
        &t(v_0), t(v_2) \neq 1 \\
        &t(v_1) = 1 \quad\Rightarrow\quad \operatorname{deg}(v_1) = 2 \\
        &t(v_1) \neq 1 \quad\Rightarrow\quad \operatorname{deg}(v_1) > 2
      \end{align}
  \end{enumerate}
\end{definition}

Note that condition 3 implies that all inner vertices have an even degree.

For all $\{i, j, k\} = \{0, 1, 2\}$, the $k$-side of a decoration $D$ is the path on the border of the outer face between $v_i$ and $v_j$ that does not pass through $v_k$.

We can fill each triangular face of a chamber system $C_G$ with a decoration, by identifying the vertex of type $i$ with $v_i$ for $i \in \{0,1,2\}$ and identifying corresponding vertices on the boundary. This results in a new chamber system $C_{G'}$ of a new graph $G'$, as can be seen in \cref{fig:decorate}.

\begin{figure}[htp]
  \centering
  \parbox[c]{0.25\textwidth}{
    \centering
    \begin{tikzpicture}[scale=0.5]
      \coordinate (v0) at (3, 0);
      \node[below right] at (v0) {0};
      \coordinate (v1) at (0, 0);
      \node[below left] at (v1) {1};
      \coordinate (v2) at (0, 4);
      \node[above left] at (v2) {2};

      \draw[edge] (v1) -- ($(v0)!(v1)!(v2)$);
      \draw[edge1] (v0) -- (v1) -- (v2);
      \draw[edge0] (v0) -- (v2);
    \end{tikzpicture}
  }
  \parbox[c]{0.7\textwidth}{
    \begin{tikzpicture}[scale=0.5]
      \useasboundingbox (-5,-2.5) rectangle (11,8.5);

      \coordinate (v1) at (0,6);
      \coordinate (v2) at (6,6);
      \coordinate (v3) at (0,0);
      \coordinate (v4) at (6,0);
      \coordinate (v5) at (2,4);
      \coordinate (v6) at (4,4);
      \coordinate (v7) at (2,2);
      \coordinate (v8) at (4,2);

      \coordinate (e1) at (3,6);
      \coordinate (e2) at (6,3);
      \coordinate (e3) at (3,0);
      \coordinate (e4) at (0,3);
      \coordinate (e5) at (3,4);
      \coordinate (e6) at (4,3);
      \coordinate (e7) at (3,2);
      \coordinate (e8) at (2,3);
      \coordinate (e9) at (1,5);
      \coordinate (e10) at (5,5);
      \coordinate (e11) at (5,1);
      \coordinate (e12) at (1,1);

      \coordinate (f0) at (3,8);
      \coordinate (f1) at (3,5);
      \coordinate (f2) at (5,3);
      \coordinate (f3) at (3,1);
      \coordinate (f4) at (1,3);
      \coordinate (f5) at (3,3);

      \draw[edge1] (f0) -- (e1) -- (f1) -- (e5) -- (f5);
      \draw[edge1] (f0) .. controls (10,7) and (7,3) .. (e2) -- (f2) -- (e6) -- (f5);
      \draw[edge1] (f0) .. controls (-4,7) and (-1,3) .. (e4) -- (f4) -- (e8) -- (f5);
      \draw[edge1] (f0) .. controls (18,9) and (4,-8) .. (e3) -- (f3) -- (e7) -- (f5);
      \draw[edge1] (f1) -- (e10) -- (f2) -- (e11) -- (f3) -- (e12) -- (f4) -- (e9) -- (f1);

      \draw[edge0] (f0) -- (v1) (f0) -- (v2)
                   (f0) .. controls (-4,8) and (-4,1) .. (v3)
                   (f0) .. controls (10,8) and (10,1) .. (v4);
      \draw[edge0] (f1) -- (v1) (f1) -- (v2) (f1) -- (v5) (f1) -- (v6);
      \draw[edge0] (f2) -- (v2) (f2) -- (v4) (f2) -- (v6) (f2) -- (v8);
      \draw[edge0] (f3) -- (v3) (f3) -- (v4) (f3) -- (v7) (f3) -- (v8);
      \draw[edge0] (f4) -- (v1) (f4) -- (v3) (f4) -- (v5) (f4) -- (v7);
      \draw[edge0] (f5) -- (v5) (f5) -- (v6) (f5) -- (v7) (f5) -- (v8);

      \draw[edge1] (v1) -- (v2) -- (v4) -- (v3) -- cycle;
      \draw[edge1] (v5) -- (v6) -- (v8) -- (v7) -- cycle;
      \draw[edge1] (v1) -- (v5) (v2) -- (v6) (v4) -- (v8) (v3) -- (v7);

      \draw[edge] (e1) -- (e10) -- (e5) -- (e9) -- cycle;
      \draw[edge] (e2) -- (e11) -- (e6) -- (e10) -- cycle;
      \draw[edge] (e3) -- (e12) -- (e7) -- (e11) -- cycle;
      \draw[edge] (e4) -- (e9) -- (e8) -- (e12) -- cycle;
      \draw[edge] (e5) -- (e6) -- (e7) -- (e8) -- cycle;
      \draw[edge] (e1) .. controls (7,7) .. (e2)
                       .. controls (7,-1) .. (e3)
                       .. controls (-1,-1) .. (e4)
                       .. controls (-1,7) .. cycle;
    \end{tikzpicture}
  }
  \caption{The decoration \emph{ambo} applied to the cube of \cref{fig:barycentric}. The resulting graph $G'$ is the one in \iftoggle{bw}{solid lines}{black}.}\label{fig:decorate}
\end{figure}

This is very similar to the lsp operations of \cite{Brinkmann2017}. We are constructing graphs by subdividing the chambers of the chamber system. One key difference is that we impose no restrictions on the connectivity. This means that we can apply decorations to arbitrary embedded graphs, but when applied to a polyhedron -- \ie a 3\dash connected plane graph -- it is possible that the result has a lower connectivity. We will address this problem later with additional restrictions on decorations.

For now, we will repeat Definition 5.1 of \cite{Brinkmann2017} without the restrictions on the connectivity.

\begin{definition}\label{def:lsp}
Let $T$ be a connected periodic tiling of the Euclidean plane with chamber system $C_T$, that is given by a barycentric subdivision that is invariant under the symmetries of $T$.
Let $v_0,v_1,v_2$ be points in the Euclidean plane so that for $0\le i < j \le 2$ the line $L_{i,j}$ through $v_i$ and $v_j$ is a mirror axis of the tiling.

If the angle between $L_{0,1}$ and $L_{2,1}$ is $90$ degrees, the angle between $L_{2,1}$ and $L_{2,0}$ is $30$ degrees and consequently the angle between $L_{0,1}$ and $L_{0,2}$ is $60$ degrees, then the triangle $v_0,v_1,v_2$ subdivided into chambers as given by $C_T$ and the corners $v_0,v_1,v_2$ labelled with their names $v_0,v_1,v_2$ is called a \emph{local symmetry-preserving operation}, {\em lsp operation} for short.

The result $O(G)$ of applying an lsp operation $O$ to a connected graph $G$ is given by subdividing each chamber $C$ of the chamber system $C_G$ with $O$ by identifying for $0\le i\le 2$ the vertices of $O$ labelled $v_i$ with the vertices labelled $i$ in $C$.
\end{definition}

An lsp operation is called \emph{$k$\dash connected} for $k \in \{1,2,3\}$ if it is derived from a $k$\dash connected tiling $T$. So the original definition was for 3\dash connected lsp operations only. In order to correctly determine the connectivity, we first need to identify which chamber systems correspond to $k$\dash connected graphs. To decide whether a graph $G$ is $k$\dash connected based on its chamber system $C_G$, we can look at the \emph{type-1 cycles} in $C_G$. A type-1 cycle is a cycle in the subgraph of $C_G$ that consists of the type-1 edges only. A type-1 cycle is empty if there are no vertices on the inside or on the outside of the cycle in this type-1 subgraph. Note that in the graph $C_G$ these cycles are not necessarily empty.

\begin{lemma}\label{lemma:connectivity}
  A plane graph $G$ is
  \begin{enumerate}
    \item 2\dash connected if and only if $C_G$ contains no type-1 cycles of length 2.
    \item 3\dash connected if and only if $G$ is 2\dash connected and $C_G$ contains no non-empty type-1 cycles of length 4.
  \end{enumerate}
\end{lemma}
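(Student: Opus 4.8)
The plan is to first decode what the type-1 subgraph of $C_G$ actually is. In the barycentric subdivision each chamber is a triangle carrying one vertex of each type, and the edge-typing rule forces the edge joining the type-0 and type-2 corners to be the type-1 edge. Hence every type-1 edge joins an original vertex (type 0) to a face centre (type 2), and the two chambers meeting at a single corner of a face $f$ at a vertex $v$ share their common type-1 edge $v$--$c_f$. Consequently the number of parallel type-1 edges between $v$ and $c_f$ equals the number of corners $f$ has at $v$, that is, the number of times $v$ is traversed by the boundary walk of $f$. So the type-1 subgraph is precisely the vertex--face incidence multigraph of $G$, and a type-1 cycle is a closed alternating walk $v_0, c_{f_0}, v_1, c_{f_1}, \dots$ through vertices and faces of $G$.

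For part (1) I would observe that a type-1 cycle of length 2 is a pair of parallel type-1 edges between one vertex $v$ and one face centre $c_f$, which by the previous paragraph exists iff $v$ occurs at least twice on the boundary walk of some face $f$. I would then invoke the standard planarity fact that a connected plane graph is 2-connected iff every facial walk is a simple cycle. The forward direction is then immediate; for the converse, if $v$ appears twice on the boundary of $f$ the two subwalks between its occurrences exhibit $v$ as a cut vertex, so $G$ is not 2-connected. A short remark dispatching the degenerate small cases would round this off.

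For part (2) I would assume $G$ is 2-connected, so by part (1) every face is a simple cycle and there are no type-1 digons. A type-1 cycle of length 4 is then a closed curve $u \to c_f \to v \to c_g \to u$ with $u \ne v$ vertices lying on two distinct faces $f, g$; routed through the interiors of $f$ and $g$ it becomes a Jordan curve meeting $G$ only in $u$ and $v$. The heart of the argument is the correspondence: this curve is non-empty (both of its sides carry a type-1 vertex, hence, since every face is bounded by a cycle, an original vertex) exactly when $G - \{u,v\}$ is disconnected. In one direction I build the separating curve from a 2-cut $\{u,v\}$ by taking $f, g$ to be the two faces through which one crosses from one side of the separation to the other around $u$ and $v$; in the other direction I read a 2-cut off a non-empty curve via the Jordan curve theorem. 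Since $G$ is 3-connected iff it is 2-connected with no 2-cut, this gives the claim.

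The main obstacle I anticipate is making this equivalence fully rigorous: one must verify that empty type-1 4-cycles are exactly the trivial configurations --- most notably the curve hugging a single edge $uv$, whose two faces are the two sides of that edge and which bounds an empty region --- so that they are correctly excluded, and that every genuine 2-cut really does arise from a pair of common faces rather than being overlooked. Reconciling type-0 emptiness with type-1 emptiness (the type-1 subgraph also contains the type-2 face centres) and checking the low-order degenerate cases is where the care lies; the underlying topology is a routine Jordan curve argument once the vertex--face incidence interpretation of the type-1 subgraph is established.
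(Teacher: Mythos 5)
Your proposal is correct and follows essentially the same route as the paper: type-1 cycles of length $2$ are digons in the vertex--face incidence graph, i.e.\ repeated occurrences of a vertex on a facial walk (cut vertices), and non-empty type-1 cycles of length $4$ are Jordan curves passing through a $2$-cut $\{u,v\}$ and two common faces, with vertices of $G$ on both sides. Your explicit identification of the empty $4$-cycles as exactly the configurations hugging a single edge $uv$ is a detail the paper leaves implicit, but the underlying argument is the same.
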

\begin{proof}
  \begin{enumerate}
    \item Suppose $C_G$ contains a type-1 cycle of length 2. This cycle contains one type-0 vertex $v$, incident to at least one type-2 edge inside the cycle and at least one type-2 edge outside the cycle (see \cref{fig:cut-vertex}), because $C_G$ is a barycentric subdivision. It is clear that $v$ has to be a cut-vertex of $G$.

    Conversely, if $G$ has a cut-vertex $v$, there is a face of $G$ for which $v$ occurs at least two times in its border. In $C_G$ this face corresponds with a type-2 vertex, incident with at least two type-1 edges to $v$. These edges form a type-1 cycle in $C_G$.

    \item Suppose $C_G$ contains a non-empty type-1 cycle of length 4, as can be seen in \cref{fig:2-cut}. This cycle contains two type-0 vertices $v$ and $w$, with incident type-2 edges at both sides of the cycle. Removing $v$ and $w$ from $G$ results in a disconnected graph.

    If $G$ is 2\dash connected but not 3\dash connected, there are two vertices $v$ and $w$ that disconnect $G$ when removed. So there are two non-empty subgraphs of $G$ that are only connected by $v$ and $w$, as in \cref{fig:2-cut}. This means that there is a non-empty type-1 cycle in $C_G$.
  \end{enumerate}

  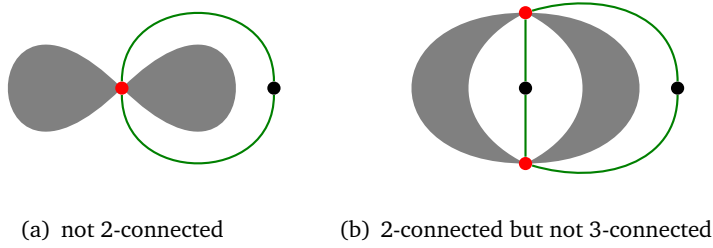
\begin{figure}[htp]
    \centering
    \subbottom[not 2-connected]{
      \begin{tikzpicture}
        \useasboundingbox (-2.5,-1.5) rectangle (2.5,1.5);
        \fill[edge,gray] (0, 0) .. controls (-2,2) and (-2,-2) .. (0, 0);
        \fill[edge,gray] (0, 0) .. controls (2,2) and (2,-2) .. (0, 0);
        \node[vertex0] (v0) at (0, 0) {};
        \node[vertex] (v1) at (2, 0) {};
        \draw[edge1] (v0) .. controls (0,1.3) and (2,1.3) .. (v1)
                     (v0) .. controls (0,-1.3) and (2,-1.3) .. (v1);
      \end{tikzpicture}
      \label{fig:cut-vertex}
    }
    \subbottom[2-connected but not 3-connected]{
      \begin{tikzpicture}
        \useasboundingbox (-2.5,-1.5) rectangle (2.5,1.5);
        \fill[edge,gray] (0, 1) .. controls (-2,1) and (-2,-1) .. (0, -1)
                              .. controls (-1,-0.5) and (-1,0.5) .. (0, 1);
        \fill[edge,gray] (0, 1) .. controls (2,1) and (2,-1) .. (0, -1)
                              .. controls (1,-0.5) and (1,0.5) .. (0, 1);
        \node[vertex] (v0) at (0, 0) {};
        \node[vertex] (v1) at (2, 0) {};
        \node[vertex0] (v2) at (0, 1) {};
        \node[vertex0] (v3) at (0, -1) {};
        \draw[edge1] (v0) -- (v2) .. controls (1,1.3) and (2,1) .. (v1)
                     .. controls (2,-1) and (1,-1.3) .. (v3) -- (v0);
      \end{tikzpicture}
      \label{fig:2-cut}
    }
    \caption{Two graphs with type-1 cycles. The gray area contains the graph. Only the type-1 edges of the chamber system are shown. The type-0 vertices are \iftoggle{bw}{triangles}{red} and the type-2 vertices are \iftoggle{bw}{cirkels}{black}.}\label{fig:cuts}
  \end{figure}
\end{proof}

Note that this theorem only holds for plane graphs, since the proof relies on the Jordan curve theorem. A counterexample to an equivalent theorem for embedded graphs of higher genus is the dual of a 3\dash connected graph on the torus, which can have a 2\dash cut (see \cite{Bokal2018}).

Since we introduced a more general definition of lsp operations, we can also formulate a more general version of Theorem 5.2 in \cite{Brinkmann2017}.

\begin{theorem}
  If $G$ is a $k$\dash connected plane graph with $k \in \{1,2,3\}$, and $O$ is a $k$\dash connected lsp operation, then $O(G)$ is a $k$\dash connected plane graph.
\end{theorem}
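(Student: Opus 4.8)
The plan is to handle the three values of $k$ in increasing order and, for $k \in \{2,3\}$, to reduce the claim to the type-1 cycle criteria of \cref{lemma:connectivity} applied to $C_{O(G)}$. First I would record that $O(G)$ is a plane graph: by \cref{def:lsp} each chamber of $C_G$ is a plane triangle filled with the plane decoration $O$, and adjacent chambers, being mirror images in the barycentric subdivision, receive mirror-image copies whose side-labellings agree along the shared side; hence the pieces glue into a well-defined plane chamber system $C_{O(G)}$ whose type-2 edges form $O(G)$. The case $k=1$ is then immediate: $G$ connected makes $C_G$ connected, and filling every chamber with the $2$\dash connected (hence connected) decoration $O$ while identifying shared boundary vertices yields a connected $C_{O(G)}$, so $O(G)$ is connected.

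The core of the argument is a local-versus-global analysis of the type-1 subgraph of $C_{O(G)}$. I would first establish that repeatedly reflecting the fundamental triangle of $O$ across its sides reconstructs exactly $C_T$, so that $C_{O(G)}$ is locally isomorphic to $C_T$ away from the corner images $v_0$ and $v_2$. Indeed, at every edge-midpoint corner $v_1$ exactly four chambers meet, matching the $90$-degree angle at $v_1$, and the neighbourhood of any interior decoration vertex is by construction a neighbourhood of $C_T$; the local structure can deviate only at the vertices and faces of $G$, where the $v_0$\dash and $v_2$\dash corners sit and where $2\deg_G(\cdot)$ resp. twice the face size chamber-copies meet instead of the number dictated by the symmetry group of $T$. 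Using this, I would classify any forbidden type-1 cycle of $C_{O(G)}$ — a length-$2$ cycle when $k=2$, a non-empty length-$4$ cycle when $k=3$ — according to whether it avoids or meets the corners $v_0, v_2$.

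If a forbidden cycle avoids all $v_0$\dash and $v_2$\dash corners, it lies in a region where $C_{O(G)}$ is isomorphic to $C_T$, so it is the image of a type-1 cycle of $C_T$ of the same length and emptiness; since $O$ is $k$\dash connected, \cref{lemma:connectivity} applied to $T$ forbids such a cycle, a contradiction. The remaining case is a forbidden cycle passing through a $v_0$\dash or $v_2$\dash corner, \ie through a vertex or face of $G$. Here I would argue that collapsing each decorated chamber back to its chamber maps the forbidden cycle to a type-1 cycle of $C_G$ of the same critical type — a length-$2$ cycle (a cut-vertex of $G$) for $k=2$, or a non-empty length-$4$ cycle (a $2$\dash cut of $G$) for $k=3$ — contradicting the $k$\dash connectivity of $G$ through \cref{lemma:connectivity}. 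For $k=3$ I would additionally invoke the already-proven $k=2$ statement to secure $2$\dash connectivity of $O(G)$ before ruling out non-empty $4$\dash cycles.

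The step I expect to be the main obstacle is precisely this corner case: making rigorous the projection of a type-1 cycle of $C_{O(G)}$ that threads through the fans of chambers around vertices and faces of $G$ down to a type-1 cycle of $C_G$, and verifying that both its length and its (non-)emptiness are preserved. This calls for a careful local analysis near $v_0$ and $v_2$ — exploiting the degree conditions of \cref{def:decoration} on the corner vertices together with the structure of the $k$\dash sides — to show that a short cut cannot be manufactured by the decoration alone but must witness a genuine cut of $G$.
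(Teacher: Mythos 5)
Your overall strategy for $k=2$ and $k=3$ --- reduce everything to the type-1 cycle criteria of \cref{lemma:connectivity} applied to $C_{O(G)}$ --- is the right one, and your $k=1$ case matches the paper. But the way you decompose the cycle analysis leaves a genuine gap, and it is exactly the step you yourself flag as the main obstacle. Your corner branch requires ``collapsing each decorated chamber back to its chamber'' so that a forbidden type-1 cycle of $C_{O(G)}$ projects to a type-1 cycle of $C_G$ of the same length and emptiness. This is not a routine verification: the types in $C_{O(G)}$ are assigned by the decoration's labelling, not inherited from $C_G$, so the image of a vertex of $G$ need not have type $0$ and the image of a face need not have type $2$ (the dual operation already swaps them). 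A type-1 $2$-cycle of $C_{O(G)}$ threading through the image of a \emph{face} of $G$ would, under your projection, have to witness a cut-vertex of $G$, and nothing in your sketch explains why. Since you never carry this step out, the proof is incomplete. There is a second hole in the corner-free branch for $k=3$: a type-1 $4$-cycle can \emph{enclose} a $v_0$- or $v_2$-corner without passing through it, and then the disk it bounds is not isomorphic to a region of $C_T$, so the ``local isomorphism away from the corners'' argument does not apply to it.

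The paper avoids both problems by choosing a different decomposition. For $k=2$ it observes that the two edges of a type-1 $2$-cycle each lie in a chamber of $C_G$, and since both endpoints of the cycle lie in the intersection of those chambers, the two chambers either coincide or share a full side; hence the entire cycle --- corner images included --- sits inside a one- or two-chamber region that is isomorphic to the corresponding region of $C_T$, and \cref{lemma:connectivity} applied to $T$ (not to $G$) rules it out. No projection to $C_G$ is ever needed. For $k=3$ the paper does not attempt a direct argument at all and instead cites the proof in \cite{Brinkmann2017}; this is telling, because the analogous ``at most four chambers'' region for a $4$-cycle need not embed in $C_T$ (it can wrap around a vertex or face of $G$), which is precisely where your sketch also breaks down. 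To salvage your route you would need to prove the projection lemma in full, handling the type permutation and the degree conditions of \cref{def:decoration} at $v_0$ and $v_2$; as written, the argument does not close.
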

\begin{proof}
  It is clear that $O(G)$ is a plane graph. For $k = 1$, we know that $T$ and $G$ are connected, and it follows easily that $O(G)$ is connected. For $k = 3$, the proof is given in~\cite{Brinkmann2017}. For $k = 2$, we will prove that there is no cut-vertex in $O(G)$.

  A type-1 cycle of length 2 in $C_{O(G)}$ is either completely contained in one chamber of $C_G$\footnote{With a chamber of $C_G$ in $C_{O(G)}$, we mean the area that was a chamber of $C_G$ before it was subdivided by $O$} (see \cref{fig:2cycle1}), or it is split between two chambers of $C_G$ (see \cref{fig:2cycle2}). Both cases cannot appear, as for any chamber (\resp any pair of adjacent chambers) there is an isomorphism between this chamber (\resp these two chambers) and the corresponding area in $T$, and according to \cref{lemma:connectivity} $T$ has no type-1 cycles of length 2.

  This implies that $C_{O(G)}$ contains no type-1 cycles of length 2, and thus, invoking once again \cref{lemma:connectivity}, $O(G)$ contains no cut-vertices.
\end{proof}

\begin{figure}[htp]
  \centering
  \setlength{\tabcolsep}{0.5cm}
  \loosesubcaptions
  \begin{tabular}[c]{cc}
    \subbottom[]{
      \begin{tikzpicture}[scale=0.5]
        \coordinate (v0) at (3, 0);
        \coordinate (v1) at (0, 0);
        \coordinate (v2) at (0, 4);
        \draw[edge] (v0) -- (v1) -- (v2) -- cycle;
        \draw[edge1] (0.5,1.75) to[out=0,in=90] (1.5,0.75) to[out=180,in=-90] cycle;
      \end{tikzpicture}
      \label{fig:2cycle1}
    } &
    \subbottom[]{
      \begin{tikzpicture}[scale=0.5]
        \coordinate (v0) at (3, 0);
        \coordinate (v1) at (0, 0);
        \coordinate (v2) at (0, 4);
        \coordinate (v0') at (-3, 0);
        \draw[edge] (v1) -- (v2) -- (v0) -- (v0') -- (v2);
        \draw[edge1] (0,2.25) to[out=-30,in=30] (0,0.75) to[out=150,in=-150] cycle;
      \end{tikzpicture}
      \label{fig:2cycle2}
    }
  \end{tabular}
  \caption{The different situations where type-1 cycles of length 2 can occur.}\label{fig:2cycles}
\end{figure}
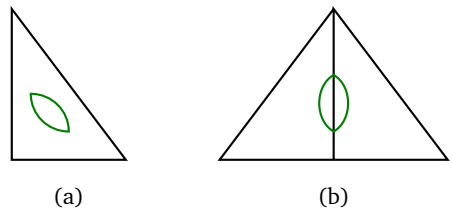

We can prove similar properties for decorations, but it is easier to use the correspondence between lsp operations and decorations. Although the way they are defined is rather different, in reality they are the same thing. The triangle $v_0,v_1,v_2$ of an lsp operation that is derived from a tiling has exactly the properties of a decoration, and each decoration can be derived as an lsp operation from a tiling.

\begin{theorem}\label{thm:lsp}
  Each decoration defines an lsp operation and vice versa.
\end{theorem}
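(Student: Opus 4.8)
The plan is to establish the two directions of the equivalence separately and then to observe that the resulting constructions are mutually inverse. The geometric fact that drives everything is the choice of angles in \cref{def:lsp}: the angles $90^\circ$, $60^\circ$, $30^\circ$ at the corners $v_1$, $v_0$, $v_2$ are $\pi/2$, $\pi/3$, $\pi/6$, and since $\tfrac12+\tfrac13+\tfrac16=1$ the reflections in the three sides $L_{0,1}$, $L_{0,2}$, $L_{1,2}$ generate the Euclidean $(2,3,6)$ triangle group $\Gamma$. Consequently the triangle $v_0v_1v_2$ is a fundamental domain for $\Gamma$, and its images under $\Gamma$ tile the plane. This is exactly the bridge between a single labelled triangle (a decoration) and a full invariant tiling (an lsp operation).

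I would first treat the direction ``every lsp operation is a decoration'', which amounts to a verification. Let $O$ be the triangle $v_0v_1v_2$ cut from the invariant barycentric subdivision $C_T$ of a tiling $T$. Being a piece of a barycentric subdivision, it is a 2\dash connected plane graph whose inner faces are the chamber triangles (condition 1); the labelling of any barycentric subdivision satisfies $\{t(e),t(v),t(w)\}=\{0,1,2\}$ on each edge and the consecutive-edge restriction at each vertex (conditions 2 and 3). For condition 4, the interior valences are precisely those of a barycentric subdivision -- a type-$1$ vertex (edge centre) always lies on four chambers and so has degree $4$, while an interior type-$0$ vertex (original vertex) or type-$2$ vertex (face centre) has even degree exceeding $4$ -- and a vertex lying on a single side $L_{i,j}$ inherits half of such a star, giving degree $3$ in the type-$1$ case and more than $3$ otherwise. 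At the corners the angles enter: the reflection group closes up after $4$, $6$, $12$ copies around $v_1$, $v_0$, $v_2$ respectively, and since a type-$1$ vertex carries four chambers meeting in right angles it can only sit at the $90^\circ$ corner $v_1$ (whence $t(v_0),t(v_2)\neq 1$) and then with degree $2$; this is exactly the corner part of condition 4.

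I would then prove the converse, ``every decoration is an lsp operation'', which carries the real content. Given a decoration $D$, realise it geometrically as a triangle with angles $\pi/2$, $\pi/3$, $\pi/6$ at $v_1$, $v_0$, $v_2$, each $k$-side placed on the corresponding line $L_{i,j}$, and form the orbit $\Gamma D$. By the opening remark this covers the plane. The task is to show that $\Gamma D$ is the barycentric subdivision $C_T$ of an honest tiling $T$. Two ingredients make the copies glue consistently: every boundary vertex of $D$ lies on a mirror axis and is therefore fixed by the corresponding reflection, so adjacent copies agree pointwise along a shared side; and, because $D$ is a fundamental domain, distinct copies meet only along shared sides and corners. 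It then remains to check that every vertex of $\Gamma D$ acquires the valence of a barycentric-subdivision vertex, and here conditions 3 and 4 of \cref{def:decoration} are used in reverse: a side type-$1$ vertex of degree $3$ is completed by its mirror image to an interior degree-$4$ vertex, a corner $v_1$ of degree $2$ closes up to degree $4$ under its four copies, and the even-degree property (from condition 3) together with the strict inequalities guarantees that type-$0$ and type-$2$ vertices attain even degree at least $6$. Reading off the type-$2$ edges of $\Gamma D$ then yields a periodic graph $T$ of which $\Gamma D = C_T$ is the invariant barycentric subdivision, so $D$ is the defining triangle of an lsp operation.

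The main obstacle is this last gluing step: showing that the purely local conditions of a decoration are not only necessary but \emph{sufficient} for $\Gamma D$ to be a genuine barycentric subdivision -- in other words, that conditions 2--4 of \cref{def:decoration} are exactly the combinatorial trace of the barycentric-subdivision axioms at interior, side, and corner positions. Once this is in place in both directions, the two constructions are visibly inverse to one another: extracting the fundamental triangle from the tiling built out of $D$ returns $D$, and applying $\Gamma$ to the triangle of an lsp operation $O$ reconstructs its tiling $T$. This yields the claimed correspondence.
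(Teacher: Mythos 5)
Your construction for the hard direction is the same one the paper uses: the orbit of the triangle under the $(2,3,6)$ triangle group is exactly the set of chambers of the hexagonal lattice, so ``form $\Gamma D$'' and ``decorate every chamber of $C_H$ with $D$'' are the same thing in geometric and combinatorial language respectively. The genuine gap is that \cref{def:lsp} requires $T$ to be a \emph{connected} periodic tiling, and you never verify that the graph $T$ read off from the type-2 edges of $\Gamma D$ is connected. This is not a formality: it is where the paper's proof does essentially all of its work. The paper first shows, by induction on the number of triangles, that the type-2 subgraph of $D$ itself is connected (remove the outer edges of a triangle meeting the outer face, note that conditions 1--3 of \cref{def:decoration} survive, apply the induction hypothesis, and reattach the removed type-2 edge via a type-0 or type-1 vertex of degree at least 3), and then observes that every side of $D$ contains at least two vertices, not both of type 2, so each side meets the type-2 subgraph; chaining this along a sequence of adjacent chambers of $C_H$ produces a type-2 path between any two vertices of $C_T$, hence connectivity of $T$. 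Without such an argument your $\Gamma D$ could a priori be the barycentric subdivision of a disconnected periodic structure, which would not be an lsp operation as defined.

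A secondary point: you explicitly name the ``last gluing step'' --- that the local conditions of \cref{def:decoration} suffice for $\Gamma D$ to be a genuine barycentric subdivision --- as the main obstacle, but you leave it as an obstacle rather than discharging it. Your valence bookkeeping at interior, side, and corner vertices is the right list of checks and is in fact more detailed than the paper's one-sentence ``straightforward'' treatment of the converse direction, so that part is recoverable; the missing connectivity argument is the substantive omission.
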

\begin{proof}
  It is straightforward that the graph defined by an lsp operation is unique and satisfies the conditions of \cref{def:decoration}. We still have to prove that each decoration defines an lsp operation.

  Given a decoration $D$, we can take the hexagonal lattice $H$ and use $D$ to decorate each chamber of the chamber system $C_H$. The result will be a chamber system $C_T$ of a tiling $T$.

  We will first prove that the type-2 subgraph of $D$ is connected, by induction on the number of triangles. There is always at least one triangle in $D$ that shares one or two edges with the outer face. We remove these edges, and call the result $D'$. It is clear that $D'$ still satisfies properties 1-3 of \cref{def:decoration}, and by induction its type-2 subgraph is connected. If one of the removed edges has type $2$, it is connected to $D'$ by a vertex of type 0 or 1 with degree at least 3, and therefore it is connected to the type-2 subgraph of $D'$.

  Given vertices $u$ and $v$ in the type-2 subgraph of $C_T$, there exists a sequence of chambers $C_0, \dotsc, C_n$ of $H$ such that two consecutive chambers $C_i$ and $C_{i + 1}$ share one side, and $u$ is contained in $C_0$ and $v$ in $C_n$. Since there are at least two vertices on each side of $D$, and they are not both of type 2, at least one of them is in the type-2 subgraph of $C_T$. Thus, there is a type-2 path between $u$ and $v$ that passes through all chambers in the sequence $C_0, \dotsc, C_n$, and the type-2 subgraph of $C_T$ is connected. It follows immediately that $T$ is connected too.

  We can choose the vertices of one chamber of $C_H$ in $T$ as $v_0$, $v_1$ and $v_2$. This satisfies the properties of \cref{def:lsp}, and it is clear that the decoration defined by the triangle $v_0, v_1, v_2$ is equal to $D$.
\end{proof}

This correspondence can be further extended to 2\dash connected and 3\dash connected operations.

\begin{definition}\label{def:2-decoration}
  A \emph{2\dash connected decoration} is a decoration with
  \begin{enumerate}
    \item no type-1 cycles of length 2
    \item no internal type-1 edges between two vertices on a single side
  \end{enumerate}
\end{definition}

\begin{definition}\label{def:3-decoration}
  A \emph{3\dash connected decoration} is a 2\dash connected decoration with
  \begin{enumerate}
    \item no type-1 edge between sides 0 and 2
    \item no non-empty type-1 cycles of length 4
  \end{enumerate}
\end{definition}

Note that, when seen as a graph, a decoration is always at least 2\dash connected.

\begin{theorem}\label{thm:lsp-2}
  Each 2\dash connected decoration $D$ defines a 2\dash connected lsp operation and vice versa.
\end{theorem}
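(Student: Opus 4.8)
The plan is to leverage the correspondence of \cref{thm:lsp} and check that the \emph{2\dash connected} qualifier matches on both sides of that bijection. By \cref{def:lsp} an lsp operation is 2\dash connected exactly when its underlying tiling $T$ is 2\dash connected, and by \cref{lemma:connectivity} this holds if and only if the chamber system $C_T$ contains no type-1 cycles of length~2. Since $C_T$ is obtained by decorating every chamber of the hexagonal lattice $H$ with $D$, as in the proof of \cref{thm:lsp}, the statement reduces to a single equivalence: $C_T$ has no type-1 cycle of length~2 if and only if $D$ satisfies conditions~1 and~2 of \cref{def:2-decoration}. I would prove the two implications separately, reusing the case analysis from the connectivity theorem for $O(G)$.

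For the forward direction, suppose $D$ is a 2\dash connected decoration and assume towards a contradiction that $C_T$ contains a type-1 cycle of length~2. Exactly as in \cref{fig:2cycles}, such a cycle is either contained in a single copy of $D$ (\cref{fig:2cycle1}) or split between two copies glued along a shared side (\cref{fig:2cycle2}). The first case is excluded by condition~1. In the second case the two endpoints of the cycle lie in both copies, hence on their common side, and each copy supplies one of the two type-1 edges joining them. A type-1 edge lying on the boundary of that side is identified with its counterpart in the neighbouring copy under the gluing, so it contributes a single edge rather than two; therefore both contributing edges must be internal type-1 edges between two vertices of one side, which condition~2 forbids. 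Hence no such cycle exists, and by \cref{lemma:connectivity} the tiling $T$ is 2\dash connected, so the lsp operation is 2\dash connected.

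For the converse, suppose the lsp operation is 2\dash connected, so that $C_T$ has no type-1 cycle of length~2, and recall that $D$ appears as a single chamber of $C_T$. Any type-1 cycle of length~2 inside $D$ would already be one in $C_T$, which gives condition~1. If $D$ had an internal type-1 edge between two vertices $a$ and $b$ on one side, then the chamber of $C_T$ adjacent across that side is the mirror image of $D$ and carries a second type-1 edge between the identified vertices $a$ and $b$; together these form a type-1 cycle of length~2 in $C_T$, a contradiction. This yields condition~2, so $D$ is a 2\dash connected decoration.

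The main obstacle is the gluing and reflection bookkeeping in the split case. I must justify that two chambers sharing both endpoints of the cycle necessarily share an entire side -- two chambers of a chamber system meet in a full side, a single vertex, or not at all, so sharing two vertices forces a shared side -- and I must carefully separate boundary type-1 edges, which are identified and therefore do not double, from internal ones, which do. Getting this distinction right is exactly what pins the argument on condition~2 rather than on some statement about boundary edges, and it is where the correspondence with \cref{fig:2cycle2} does the real work.
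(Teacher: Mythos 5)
Your proof is correct and follows essentially the same route as the paper: reduce to \cref{thm:lsp}, then match the absence of type-1 cycles of length~2 in $C_T$ against the two conditions of \cref{def:2-decoration} via the same two-case analysis (cycle contained in one chamber versus cycle split across a shared side lying on a mirror axis), in both directions. Your extra bookkeeping about identified boundary edges and about two chambers sharing two vertices necessarily sharing a side is somewhat more careful than the paper's one-line treatment of the split case, but it does not change the argument.
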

\begin{proof}
  A 2\dash connected decoration is a decoration, so it follows from \cref{thm:lsp} that $D$ defines an lsp operation. We still have to prove that the corresponding tiling~$T$ is 2\dash connected. If $T$ is not 2\dash connected, there is a type-1 cycle of length 2 in $C_T$. If this cycle is completely contained in the triangle $v_0,v_1,v_2$, there is a cycle of length 2 in $D$ too, which is impossible. The only other possibility is that the cycle of length 2 is cut in half by $L_{ij}$, but then there would be an internal type-1 edge between 2 vertices on $L_{ij}$, which is a side of $D$.

  A 2\dash connected lsp operation with corresponding tiling $T$ defines a decoration $D$ according to \cref{thm:lsp}. We still have to prove that the extra conditions of \cref{def:2-decoration} are satisfied. If there is a type-1 cycle of length 2 in $D$, this cycle occurs in $C_T$ too, and $T$ would not be 2\dash connected. If there is an internal type-1 edge between 2 vertices on the same side, this will result in a cycle of length 2 in $T$ because this side lies on a mirror axis of $T$.
\end{proof}

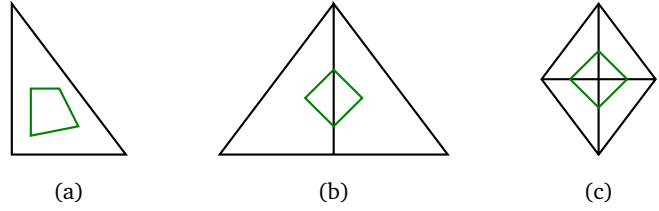
\begin{figure}[htp]
  \centering
  \setlength{\tabcolsep}{0.5cm}
  \loosesubcaptions
  \begin{tabular}[c]{ccc}
    \subbottom[]{
      \begin{tikzpicture}[scale=0.5]
        \coordinate (v0) at (3, 0);
        \coordinate (v1) at (0, 0);
        \coordinate (v2) at (0, 4);
        \draw[edge] (v0) -- (v1) -- (v2) -- cycle;
        \draw[edge1] (0.5,1.75) -- (1.25,1.75) -- (1.75,0.75) -- (0.5,0.5) -- cycle;
      \end{tikzpicture}
      \label{fig:4cycle1}
    } &
    \subbottom[]{
      \begin{tikzpicture}[scale=0.5]
        \coordinate (v0) at (3, 0);
        \coordinate (v1) at (0, 0);
        \coordinate (v2) at (0, 4);
        \coordinate (v0') at (-3, 0);
        \draw[edge] (v1) -- (v2) -- (v0) -- (v0') -- (v2);
        \draw[edge1] (0,2.25) -- (0.75,1.5) -- (0,0.75) -- (-0.75,1.5) -- cycle;
      \end{tikzpicture}
      \label{fig:4cycle2}
    } &
    \subbottom[]{
      \begin{tikzpicture}[scale=0.25]
        \coordinate (v0) at (3, 0);
        \coordinate (v1) at (0, 0);
        \coordinate (v2) at (0, 4);
        \coordinate (v0') at (-3, 0);
        \coordinate (v2') at (0, -4);
        \draw[edge] (v2) -- (v0) -- (v2') -- (v0') -- cycle (v0) -- (v0') (v2) -- (v2');
        \draw[edge1] (0,1.5) -- (1.5,0) -- (0,-1.5) -- (-1.5,0) -- cycle;
      \end{tikzpicture}
      \label{fig:4cycle4}
    }
  \end{tabular}
  \caption{The different situations where non-empty type-1 cycles of length 4 can occur.}\label{fig:4cycles}
\end{figure}

\begin{theorem}\label{thm:lsp-3}
  Each 3\dash connected decoration $D$ defines a 3\dash connected lsp operation and vice versa.
\end{theorem}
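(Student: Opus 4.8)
The plan is to follow the same two-part template as the proof of \cref{thm:lsp-2}, but now working with type-1 cycles of length~4 and the characterisation of 3-connectivity from \cref{lemma:connectivity}(2) instead of the length-2 characterisation. Throughout I would use that a 3-connected decoration is in particular a 2-connected decoration, and that a 3-connected lsp operation is in particular a 2-connected one, so that \cref{thm:lsp-2} already supplies the correspondence between $D$ and a 2-connected tiling $T$; only the two extra conditions of \cref{def:3-decoration} and the absence of non-empty type-1 4-cycles in $C_T$ remain to be matched up.

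For the forward direction I would start from a 3-connected decoration $D$, obtain its 2-connected tiling $T$ from \cref{thm:lsp-2}, and show $T$ is 3-connected by proving that $C_T$ has no non-empty type-1 cycle of length~4. Assuming such a cycle exists, I would classify it by how it meets the mirror axes of the fundamental triangle $v_0,v_1,v_2$, giving exactly the three situations of \cref{fig:4cycles}. If the cycle crosses no mirror axis it lies in a single copy of the triangle and is a non-empty type-1 4-cycle of $D$ itself (\cref{fig:4cycle1}), contradicting condition~2 of \cref{def:3-decoration}. If it surrounds the $90^\circ$ corner $v_1$ it meets four copies of the triangle, and the part inside one copy is a single type-1 edge joining side~0 to side~2, the two sides incident with $v_1$ (\cref{fig:4cycle4}), contradicting condition~1. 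The remaining case (\cref{fig:4cycle2}), where the cycle straddles exactly one mirror axis, is the delicate one: here I would use the reflection in that axis to show that the half of the cycle lying in one copy of the triangle forces either an empty cycle (contradicting non-emptiness) or a type-1 configuration on a single side already excluded by 2-connectedness (\cref{def:2-decoration}).

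For the converse I would take a 3-connected lsp operation, note that its decoration $D$ is 2-connected by \cref{thm:lsp-2}, and verify the two conditions of \cref{def:3-decoration}. A non-empty type-1 4-cycle in $D$ would reappear verbatim inside one fundamental triangle of $C_T$, so $T$ would not be 3-connected; hence condition~2 holds. A type-1 edge between sides~0 and~2 lies on the two sides meeting at the $90^\circ$ corner $v_1$, so reflecting it in the two mirror axes through $v_1$ produces four copies forming a type-1 4-cycle around $v_1$; since this cycle encloses a non-empty region it witnesses a 2-cut of $T$, contradicting 3-connectivity. Thus condition~1 holds as well.

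The main obstacle I anticipate is the straddling case \cref{fig:4cycle2} in the forward direction, together with the non-emptiness bookkeeping in the converse: I must argue carefully that a cycle cutting a single mirror axis really does force a configuration excluded by \cref{def:3-decoration} (or is empty), and, when building the 4-cycle around $v_1$ from an edge between sides~0 and~2, that the enclosed region genuinely contains a vertex of the type-1 subgraph so that the cycle counts as non-empty. The angle data of \cref{def:lsp} (exactly four copies of the triangle meet at $v_1$) is what makes the reflected edge close up into a length-4 cycle, and keeping track of the vertex types along the cycle is what cleanly separates the three cases.
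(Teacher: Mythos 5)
Your overall architecture matches the paper's: both directions, with the forward direction splitting a putative non-empty type-1 cycle of length~4 in $C_T$ into the three situations of \cref{fig:4cycles}, and your treatment of \cref{fig:4cycle1} (cycle inside one triangle, contradicting the no-4-cycle condition) and of \cref{fig:4cycle4} (cycle around the $90^\circ$ corner $v_1$, yielding a type-1 edge between sides 0 and 2) is exactly the paper's.

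The gap is precisely where you expect it, in the straddling case \cref{fig:4cycle2}, and your proposed fix does not work. A non-empty type-1 4-cycle cut in half by the mirror $L_{ij}$ meets the triangle in an internal type-1 \emph{path of length two}, $u$--$m$--$w$, with $u,w$ on the side and $m$ an internal vertex. Condition~2 of \cref{def:2-decoration} forbids only internal type-1 \emph{edges} between two vertices on a single side; it says nothing about length-two paths through an internal vertex, so this configuration is not ``already excluded by 2-connectedness,'' and the emptiness alternative you offer does not rescue the argument (the enclosed half-region can easily contain type-0 or type-2 vertices). The paper names exactly this configuration --- ``an internal type-1 path of length 2 between 2 vertices on $L_{ij}$'' --- and treats it as a forbidden configuration for 3-connected decorations, ruling it out symmetrically in the converse direction as well (``if there is an internal type-1 path of length 2 between 2 vertices on the same side \dots this will result in a cycle of length 4 in $T$''). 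Your converse, by checking only the two conditions literally listed in \cref{def:3-decoration}, silently drops this third configuration, so your forward and backward directions are not verifying the same set of properties. To close the proof you must either show that a decoration satisfying \cref{def:3-decoration} cannot contain such a length-two path (this needs an argument of its own and does not follow from \cref{def:2-decoration}), or add it explicitly to the list of excluded configurations and carry it through both directions as the paper does.
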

\begin{proof}
  A 3\dash connected decoration defines a 2\dash connected lsp operation. If $T$ is not 3\dash connected, there is a non-empty type-1 cycle of length 4. If this cycle is completely contained in the triangle $v_0,v_1,v_2$, there is a type-1 cycle of length 4 in $D$. If the cycle is cut in half by $L_{ij}$, there is an internal type-1 path of length 2 between 2 vertices on $L_{ij}$, which is a side of $D$. If the cycle is cut in four, as in \cref{fig:4cycle4}, there is a type-1 edge between sides 0 and 2.

  A 3\dash connected lsp operation with corresponding tiling $T$ defines a 2\dash connected decoration $D$. If there is a type-1 cycle of length 4 in $D$, this cycle occurs in $C_T$ too, and $T$ would not be 3\dash connected. If there is an internal type-1 path of length 2 between 2 vertices on the same side, or a type-1 edge between sides 0 and 2, this will result in a cycle of length 4 in $T$.
\end{proof}

\section{Predecorations}

The generation of all decorations will be split into two phases. In the first phase, we will construct the type-1 subgraph, consisting of all edges of type 1.

Let $n_A$ be the number of vertices in the type-1 subgraph of degree 1 with a neighbouring vertex of degree 2, $n_B$ the number of remaining vertices of degree 1, and $n_C$ the number of quadrangles with three vertices of degree 2.

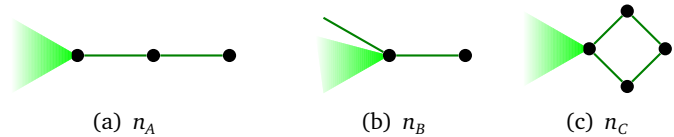
\begin{figure}[htp]
  \centering
  \subbottom[$n_A$]{
    \label{fig:nA}
    \begin{tikzpicture}
      \node[vertex] (v0) at (0,0) {};
      \node[vertex] (v1) at (1,0) {};
      \node[vertex] (v2) at (2,0) {};
      \draw[edge1] (v0) -- (v1) -- (v2);
      \shade[left color=white,right color=\iftoggle{bw}{black}{green}] (v0) -- +(150:1) -- +(210:1) -- (v0);
    \end{tikzpicture}
  }
  \qquad
  \subbottom[$n_B$]{
    \label{fig:nB}
    \begin{tikzpicture}
      \node[vertex] (v0) at (0,0) {};
      \node[vertex] (v1) at (1,0) {};
      \draw[edge1] (v0) -- (v1) (v0) -- +(150:1);
      \shade[left color=white,right color=\iftoggle{bw}{black}{green}] (v0) -- +(165:1) -- +(210:1) -- (v0);
    \end{tikzpicture}
  }
  \quad
  \subbottom[$n_C$]{
    \label{fig:nC}
    \begin{tikzpicture}
      \node[vertex] (v0) at (0,0) {};
      \node[vertex] (v1) at (0.5,0.5) {};
      \node[vertex] (v2) at (1,0) {};
      \node[vertex] (v3) at (0.5,-0.5) {};
      \draw[edge1] (v0) -- (v1) -- (v2) -- (v3) -- (v0);
      \shade[left color=white,right color=\iftoggle{bw}{black}{green}] (v0) -- +(150:1) -- +(210:1) -- (v0);
    \end{tikzpicture}
  }
  \caption{The subgraphs counted as $n_A$, $n_B$ and $n_C$}\label{fig:nA12}
\end{figure}

\newpage

\begin{lemma}\label{lemma:predecorations}
  Let $D$ be a decoration. The type-1 subgraph $D_1$ of $D$ has the following properties:
  \begin{enumerate}
    \item all inner faces are quadrangles;
    \item each inner vertex has degree at least 3;
    \item $n_A \leq 2$ and $n_A + n_B + n_C \leq 3$.
  \end{enumerate}
\end{lemma}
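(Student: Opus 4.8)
The plan is to read every claimed property off the local triangle structure that \cref{def:decoration} forces. I would first record the facts I use throughout. By condition~2 each type-1 edge joins a type-0 vertex to a type-2 vertex, so a type-1 vertex carries no type-1 edge and does not occur in $D_1$ at all, while the vertices of $D_1$ are exactly the type-0 and type-2 vertices. Adjacent vertices have distinct types, so every inner (triangular) face of $D$ has one vertex and, by condition~2 again, one edge of each type, with the type-$i$ edge opposite the type-$i$ vertex; in particular every triangle contains exactly one type-1 edge. Finally condition~3 makes the edges around a vertex alternate between its two admissible types, so inner vertices have even degree. For property~1 I would then show that the bounded faces of $D_1$ are precisely the quadrangles around the inner type-1 vertices. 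Fix a bounded face $F$ of $D_1$: its interior contains no type-1 edge, hence no type-0 or type-2 vertex (such a vertex would pull its type-1 edges inside $F$); and since a boundary vertex of $D$ survives on the outer face of $D_1$, the interior of $F$ contains only \emph{inner} type-1 vertices. As $F$ is triangulated by $D$ with every triangle's type-1 edge on $\partial F$ and its type-1 vertex off $\partial F$ (whose vertices are of type $0$ and $2$), at least one inner type-1 vertex $z$ lies in $F$; by condition~4 $\deg z=4$, so the four triangles around $z$ supply four type-1 edges forming a $4$-cycle that carries no type-1 edge inside or out and must therefore be $\partial F$. Hence $F$ is a quadrangle and $z$ is unique.

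Property~2 is then immediate: a vertex interior to $D_1$ is interior to $D$ (a boundary vertex of $D$ remains on the outer face of $D_1$) and is of type $0$ or $2$, so by conditions~3--4 it has even degree $>4$, hence $\geq 6$, exactly half of whose incident edges are type 1; its degree in $D_1$ is therefore at least $3$. For property~3 I would begin by showing that a vertex of degree $1$ in $D_1$ can only be one of the corners $v_0,v_1,v_2$: interior vertices already have $D_1$-degree $\geq 3$, and for any other boundary vertex of type $0$ or $2$ the degree in $D$ exceeds $3$, while its incident edges listed from one boundary edge to the other alternate in type (each consecutive pair bounds an inner face), forcing at least two type-1 edges. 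This already gives $n_A+n_B\leq 3$.

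The remaining content of property~3 I would obtain by analysing the three corners one at a time, using their fixed angles $60^\circ,90^\circ,30^\circ$ (equivalently the numbers $6,4,12$ of chamber copies meeting there) together with the admissible corner types permitted by \cref{def:decoration}. The aim is to prove that each corner contributes at most one defect to $D_1$: either a pendant vertex (counted by $n_A$ or $n_B$) or a quadrangle with three degree-2 vertices (counted by $n_C$), and that a corner anchoring such a quadrangle carries no pendant vertex and conversely. Charging every counted configuration to its corner then yields $n_A+n_B+n_C\leq 3$. The sharper bound $n_A\leq 2$ I would get by isolating the $90^\circ$ corner $v_1$, whose type and degree are the most tightly constrained, and checking that whenever it produces a degree-1 vertex of $D_1$ that vertex's neighbour cannot have degree $2$; so $v_1$ never contributes to $n_A$, leaving at most the two corners $v_0,v_2$.

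The triangle bookkeeping behind properties~1 and~2 is routine. The real work, and the step I expect to be the main obstacle, is the corner analysis for property~3: one must pin down the local shape of $D_1$ at each corner from its angle and permitted type, match each shape to exactly one of the configurations $A$, $B$, $C$, and verify both that distinct configurations sit at distinct corners and that the $90^\circ$ corner is barred from contributing to $n_A$.
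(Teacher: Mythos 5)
Your treatment of properties 1 and 2 is correct and in fact more detailed than the paper's (which dismisses them as immediate), and your observation that every degree-$1$ vertex of $D_1$ must be one of $v_0,v_1,v_2$ is the right first step for property 3: it already gives $n_A+n_B\le 3$. The genuine gap is that the two remaining claims --- that each quadrangle counted by $n_C$ consumes a corner not already consumed by a pendant vertex, and that $v_1$ can never be the degree-$1$ vertex of an $n_A$-configuration --- are only announced as goals, and the tool you propose for them (the angles $60^\circ,90^\circ,30^\circ$, i.e.\ the number of chamber copies meeting at each corner of the tiling) is not available from \cref{def:decoration} and is not what drives the argument. What the paper actually uses is a classification of how $D$ differs from $D_1$: every type-$1$ vertex of $D$ is either an inner vertex of degree $4$ sitting in one quadrangle of $D_1$, or an outer vertex of degree $3$ joined to three consecutive outer vertices of $D_1$ (a \emph{3-completion}), or the unique possible outer vertex of degree $2$, which must be $v_1$ (a \emph{2-completion}). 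Combining this with the degree bounds of \cref{def:decoration} at inner, outer and corner vertices is what pins each configuration to a specific corner.

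Concretely, for the configuration of \cref{fig:nA} one must argue that the two consecutive cut-vertices of $D_1$ cannot remain cut-vertices of the $2$-connected graph $D$, that the only way to repair them is by 3-completions (a 2-completion adjacent only to the pendant vertex and its neighbour repairs nothing), and that this forces the pendant vertex to have degree $2$, or degree $3$ with $v_1$ already spent on a 2-completion --- whence the pendant vertex is $v_0$ or $v_2$. Your proposed shortcut, ``check that a degree-$1$ vertex at $v_1$ never has a degree-$2$ neighbour,'' hides a real subtlety: a priori the pendant vertex could reach degree $3$ in $D$ by receiving a 3-completion on each side, which would permit it to be $v_1$; this is excluded only because those two 3-completions would turn its degree-$2$ neighbour into an \emph{inner} vertex of $D$ of degree $4$, violating \cref{def:decoration}. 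A parallel analysis is needed for \cref{fig:nC}, where the three degree-$2$ vertices of the quadrangle each gain exactly one edge from the quadrangle's interior type-$1$ vertex and must therefore be corners or acquire further completion edges, forcing either a 2-completion (tying the quadrangle to $v_1$) or two 3-completions avoiding the rightmost vertex (tying it to $v_0$, $v_1$ or $v_2$). Without carrying out these case analyses --- which you yourself flag as the main obstacle --- the bounds $n_A\le 2$ and $n_A+n_B+n_C\le 3$ are not established.
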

\begin{proof}
  It follows immediately from the properties of a decoration (\cref{def:decoration}) that the inner faces of $D_1$ are quadrangles and the inner vertices have degree at least 3.

  Each area bounded by a quadrangle in $D_1$ contains one vertex of type 1 in $D$. The only other difference between $D$ and $D_1$ is in the outer face of $D_1$, where type-1 vertices of degree 3 in $D$ (a 3-completion), and at most one of degree 2 in $D$ (a 2-completion), can be present in $D$. If there is a type-1 vertex of degree 2, then that vertex is $v_1$. An example can be seen in \cref{fig:completion}.

  The subgraph in \cref{fig:nC} can only occur if the rightmost vertex $v$ of degree two is $v_0$, $v_1$ or $v_2$, or if $v_1$ is a type-1 vertex of degree 2 connected to this vertex. Each of the three vertices of degree 2 in this subgraph of $D_1$ corresponds to $v_0$, $v_1$, $v_2$ or a vertex of degree at least 4 in $D$. The inner edges of the quadrangle in $D$ contribute exactly one to the degree of these vertices. This implies that either there is a 2-completion here (in which case $v_1$ is connected to $v$), or there are two 3-completions which do not involve $v$ (in which case $v$ is $v_0$, $v_1$ or $v_2$).

  The subgraph in \cref{fig:nB} can only occur if the rightmost vertex $v$ is $v_0$, $v_1$ or $v_2$. This vertex of degree 1 in $D_1$ corresponds to a vertex of degree at most 3 in $D$, which is only possible in $v_0$, $v_1$ or $v_2$.

  The subgraph in \cref{fig:nA} can only occur if the rightmost vertex $v$ is $v_0$ or $v_2$. There are two neighbouring cut-vertices of $D_1$ in this subgraph, which do not correspond to cut-vertices in $D$. This is only possible if both of these vertices are the middle vertex of a 3-completion. This increases the degree of $v$ in $D$ to 2, which is only possible in $v_0$ or $v_2$. The degree of $v$ can be 3 if there is a 2-completion too, but then $v_1$ is contained in this 2-completion and $v$ still has to be $v_0$ or $v_2$.

  We find that $n_A \leq \abs{\{v_0, v_2\}} = 2$ and $n_A + n_B + n_C \leq \abs{\{v_0, v_1, v_2\}} = 3$.
\end{proof}

\begin{definition}\label{def:predecoration}
  A \emph{predecoration} is a connected plane graph with an outer face that satisfies the properties of \cref{lemma:predecorations}.
\end{definition}

Given a predecoration $P$, we can try to add edges, vertices and labels to get a decoration with $P$ as its type-1 subgraph. We will have to add one type-1 vertex in each inner face of $P$, as in \cref{fig:completion}. Then we can add type-1 vertices in the outer face, and connect them to three consecutive vertices of $P$. Finally, we can add a type-1 vertex in the outer face and connect it to two consecutive vertices of $P$. This vertex has to be $v_1$.

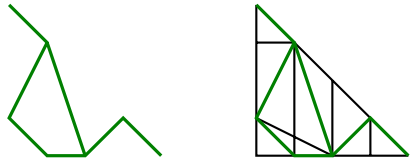
\begin{figure}[htp]
  \centering
  \begin{tikzpicture}[scale=0.5]
    \coordinate (1) at (0,4);
    \coordinate (2) at (1,3);
    \coordinate (3) at (0,1);
    \coordinate (4) at (1,0);
    \coordinate (5) at (2,0);
    \coordinate (6) at (3,1);
    \coordinate (7) at (4,0);

    \draw[edge1,very thick] (1) -- (2) -- (3) -- (4) -- (5) -- (6) -- (7) (2) -- (5);
  \end{tikzpicture}
  \hspace{1cm}
  \begin{tikzpicture}[scale=0.5]
    \coordinate (1) at (0,4);
    \coordinate (2) at (1,3);
    \coordinate (3) at (0,1);
    \coordinate (4) at (1,0);
    \coordinate (5) at (2,0);
    \coordinate (6) at (3,1);
    \coordinate (7) at (4,0);

    \draw[edge,thick] (1) |- (4) (0,3) -- (2) -- (4) (3) -- (5) -- (2,2) (2) -- (6) -- (3,0) (5) -- (7);
    \draw[edge1,very thick] (1) -- (2) -- (3) -- (4) -- (5) -- (6) -- (7) (2) -- (5);
  \end{tikzpicture}
  \caption{A predecoration with a possible completion. The edges of type 0 and 2 are both shown in black.}\label{fig:completion}
\end{figure}

By definition, the type-1 subgraph of a decoration $D$ is a predecoration. Unfortunately, not each predecoration corresponds to a type-1 subgraph of some decoration. This is \eg the case if there are too many cut-vertices, as in \cref{fig:uncompletable}.

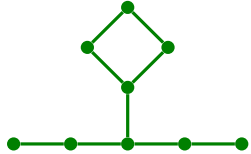
\begin{figure}[htp]
  \centering
  \begin{tikzpicture}[scale=0.75]
    \node[vertex1] (0) at (0,0) {};
    \node[vertex1] (1) at (135:1) {};
    \node[vertex1] (2) at (45:1) {};
    \node[vertex1] (3) at ($(1) + (2)$) {};
    \node[vertex1] (4) at (-2,-1) {};
    \node[vertex1] (5) at (-1,-1) {};
    \node[vertex1] (6) at (0,-1) {};
    \node[vertex1] (7) at (1,-1) {};
    \node[vertex1] (8) at (2,-1) {};

    \draw[edge1,very thick] (0) -- (1) -- (3) -- (2) -- (0) -- (6)
                            (4) -- (5) -- (6) -- (7) -- (8);
  \end{tikzpicture}
  \caption{A predecoration that cannot be completed.}\label{fig:uncompletable}
\end{figure}

\section{Construction of predecorations}

All predecorations can be constructed from the base decorations $K_2$ and $C_4$ (see \cref{fig:base}) using the 10 extension operations shown in \cref{fig:extensions}. We will prove this by showing that each predecoration, with the exception of $K_2$ and $C_4$, can be reduced by the inverse of one of the extension operations. We will then use the \emph{canonical construction path} method \cite{McKay1998} to generate all predecorations without isomorphic copies.

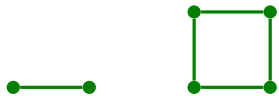
\begin{figure}[htp]
  \centering
  \begin{tikzpicture}
    \node[vertex1] (0) at (0,0) {};
    \node[vertex1] (1) at (1,0) {};
    \draw[edge1,very thick] (0) -- (1);
  \end{tikzpicture}
  \hspace{1cm}
  \begin{tikzpicture}
    \node[vertex1] (0) at (0,0) {};
    \node[vertex1] (1) at (1,0) {};
    \node[vertex1] (2) at (0,1) {};
    \node[vertex1] (3) at (1,1) {};
    \draw[edge1,very thick] (0) -- (1) -- (3) -- (2) -- (0);
  \end{tikzpicture}
  \caption{The base predecorations.}\label{fig:base}
\end{figure}

\begin{figure}[htp]
  \centering
  \begin{tabular}{rl@{\hskip 1cm}rl@{\hskip 1cm}rl}
    \toprule \addlinespace
    &
    \begin{tikzpicture}[scale=0.75,baseline=-0.75ex]
      \node[vertex0] (0) at (0,0) {};
      \shade[left color=white,right color=black] (0) -- +(150:1) -- +(210:1) -- (0);
      \shade[left color=black,right color=white] (0) -- +(30:1) -- +(-30:1) -- (0);
    \end{tikzpicture} &&
    \begin{tikzpicture}[scale=0.75,baseline=-0.75ex]
      \node[vertex] (0) at (0,0) {};
      \shade[left color=white,right color=black] (0) -- +(150:1) -- +(210:1) -- (0);
    \end{tikzpicture} &
    \multicolumn{2}{c}{
      \begin{tikzpicture}[scale=0.75,baseline=1ex]
        \shade[left color=white,right color=black] (0,1) -- ++(150:0.5) -- +(0,-1.5) -- (0,0) -- (0,1);
        \node[vertex] (0) at (0,0) {};
        \node[vertex] (2) at (0,1) {};
        \draw[edge,very thick] (0) -- (2);
      \end{tikzpicture}
      \begin{tikzpicture}[scale=0.75,baseline=1ex]
        \shade[left color=white,right color=black] (0,1) -- ++(150:0.5) -- (-0.443,-0.443) -- (0,0) -- (0,1);
        \shade[top color=black,bottom color=white] (1,0) -- ++(-60:0.5) -- (-0.443,-0.443) -- (0,0) -- (1,0);
        \node[vertex] (0) at (0,0) {};
        \node[vertex] (1) at (1,0) {};
        \node[vertex] (2) at (0,1) {};
        \draw[edge,very thick] (2) -- (0) -- (1);
      \end{tikzpicture}
      \begin{tikzpicture}[scale=0.75,baseline=1ex]
        \shade[left color=white,right color=black] (0,1) -- ++(150:0.5) -- (-0.443,-0.443) -- (0,0) -- (0,1);
        \shade[top color=black,bottom color=white] (1,0) -- (1.443,-0.443) -- (-0.443,-0.443) -- (0,0) -- (1,0);
        \shade[left color=black,right color=white] (1,1) -- ++(30:0.5) -- (1.443,-0.443) -- (1,0) -- (1,1);
        \node[vertex] (0) at (0,0) {};
        \node[vertex] (1) at (1,0) {};
        \node[vertex] (2) at (0,1) {};
        \node[vertex] (3) at (1,1) {};
        \draw[edge,very thick] (2) -- (0) -- (1) -- (3);
      \end{tikzpicture}
    } \\ \addlinespace
    \midrule \addlinespace
    1. &
    \begin{tikzpicture}[scale=0.75,baseline=-0.75ex]
      \node[vertex0] (0) at (0,0) {};
      \node[vertex0] (1) at (1,0) {};
      \shade[left color=white,right color=black] (0) -- +(150:1) -- +(210:1) -- (0);
      \shade[left color=black,right color=white] (1) -- +(30:1) -- +(-30:1) -- (1);
      \draw[edge1,very thick] (0) -- (1);
    \end{tikzpicture} &
    2. &
    \begin{tikzpicture}[scale=0.75,baseline=-0.75ex]
      \node[vertex] (0) at (0,0) {};
      \node[vertex1] (1) at (1,0) {};
      \shade[left color=white,right color=black] (0) -- +(150:1) -- +(210:1) -- (0);
      \draw[edge1,very thick] (0) -- (1);
    \end{tikzpicture} &
    8. &
    \begin{tikzpicture}[scale=0.75,baseline=1ex]
      \shade[left color=white,right color=black] (0,1) -- ++(150:0.5) -- +(0,-1.5) -- (0,0) -- (0,1);
      \node[vertex] (0) at (0,0) {};
      \node[vertex1] (1) at (1,0) {};
      \node[vertex] (2) at (0,1) {};
      \node[vertex1] (3) at (1,1) {};
      \draw[edge,very thick] (0) -- (2);
      \draw[edge1,very thick] (2) -- (3) -- (1) -- (0);
    \end{tikzpicture} \\ \addlinespace
    3. &
    \begin{tikzpicture}[scale=0.75,baseline=-0.75ex]
      \node[vertex0] (0) at (0,0) {};
      \node[vertex0] (1) at (1,0) {};
      \node[vertex1] (2) at (0.5,0.5) {};
      \node[vertex1] (3) at (0.5,-0.5) {};
      \shade[left color=white,right color=black] (0) -- +(150:1) -- +(210:1) -- (0);
      \shade[left color=black,right color=white] (1) -- +(30:1) -- +(-30:1) -- (1);
      \draw[edge1,very thick] (0) -- (2) -- (1) -- (3) -- (0);
    \end{tikzpicture} &
    5. &
    \begin{tikzpicture}[scale=0.75,baseline=-0.75ex]
      \node[vertex] (0) at (0,0) {};
      \node[vertex1] (1) at (1,0) {};
      \node[vertex1] (2) at (0.5,0.5) {};
      \node[vertex1] (3) at (0.5,-0.5) {};
      \shade[left color=white,right color=black] (0) -- +(150:1) -- +(210:1) -- (0);
      \draw[edge1,very thick] (0) -- (2) -- (1) -- (3) -- (0);
    \end{tikzpicture} &
    9. &
    \begin{tikzpicture}[scale=0.75,baseline=1ex]
      \shade[left color=white,right color=black] (0,1) -- ++(150:0.5) -- (-0.443,-0.443) -- (0,0) -- (0,1);
      \shade[top color=black,bottom color=white] (1,0) -- ++(-60:0.5) -- (-0.443,-0.443) -- (0,0) -- (1,0);
      \node[vertex] (0) at (0,0) {};
      \node[vertex] (1) at (1,0) {};
      \node[vertex] (2) at (0,1) {};
      \node[vertex1] (3) at (1,1) {};
      \draw[edge,very thick] (2) -- (0) -- (1);
      \draw[edge1,very thick] (2) -- (3) -- (1);
    \end{tikzpicture} \\ \addlinespace
    4. &
    \begin{tikzpicture}[scale=0.75,baseline=-0.75ex]
      \node[vertex0] (0) at (0,0) {};
      \node[vertex0] (1) at (1,0) {};
      \node[vertex1] (2) at (0,1) {};
      \node[vertex1] (3) at (1,1) {};
      \shade[left color=white,right color=black] (0) -- +(150:1) -- +(210:1) -- (0);
      \shade[left color=black,right color=white] (1) -- +(30:1) -- +(-30:1) -- (1);
      \draw[edge1,very thick] (0) -- (2) -- (3) -- (1) -- (0);
    \end{tikzpicture} &
    6. &
    \begin{tikzpicture}[scale=0.75,baseline=-0.75ex]
      \node[vertex] (0) at (0,0) {};
      \node[vertex1] (1) at (1,0) {};
      \node[vertex1] (2) at (0,1) {};
      \node[vertex1] (3) at (1,1) {};
      \node[vertex1] (4) at (0,-1) {};
      \node[vertex1] (5) at (1,-1) {};
      \shade[left color=white,right color=black] (0) -- +(150:1) -- +(210:1) -- (0);
      \draw[edge1,very thick] (0) -- (2) -- (3) -- (1) -- (0) -- (4) -- (5) -- (1);
    \end{tikzpicture} &
    10. &
    \begin{tikzpicture}[scale=0.75,baseline=1ex]
      \shade[left color=white,right color=black] (0,1) -- ++(150:0.5) -- (-0.443,-0.443) -- (0,0) -- (0,1);
      \shade[top color=black,bottom color=white] (1,0) -- (1.443,-0.443) -- (-0.443,-0.443) -- (0,0) -- (1,0);
      \shade[left color=black,right color=white] (1,1) -- ++(30:0.5) -- (1.443,-0.443) -- (1,0) -- (1,1);
      \node[vertex] (0) at (0,0) {};
      \node[vertex] (1) at (1,0) {};
      \node[vertex] (2) at (0,1) {};
      \node[vertex] (3) at (1,1) {};
      \draw[edge,very thick] (2) -- (0) -- (1) -- (3);
      \draw[edge1,very thick] (2) -- (3);
    \end{tikzpicture} \\ \addlinespace
    &&
    7. &
    \begin{tikzpicture}[scale=0.75,baseline=-0.75ex]
      \node[vertex] (0) at (0,0) {};
      \node[vertex1] (1) at (0.5,0.5) {};
      \node[vertex1] (2) at (0.5,-0.5) {};
      \node[vertex1] (3) at (1,0) {};
      \node[vertex1] (4) at (1,-1) {};
      \node[vertex1] (5) at (1.5,-0.5) {};
      \shade[left color=white,right color=black] (0) -- +(150:1) -- +(210:1) -- (0);
      \draw[edge1,very thick] (0) -- (1) -- (3) -- (5) -- (4) -- (2) -- (0) (2) -- (3);
    \end{tikzpicture} && \\ \addlinespace
    \bottomrule
  \end{tabular}
  \caption{The extensions. In the first row, the subgraphs before the extension is applied are given. New edges and vertices are \iftoggle{bw}{squares}{green}, and vertices that are broken apart in two new vertices are \iftoggle{bw}{diamonds}{red}. The outer face is always on the outside, and shadowed parts contain at least one vertex.}\label{fig:extensions}
\end{figure}
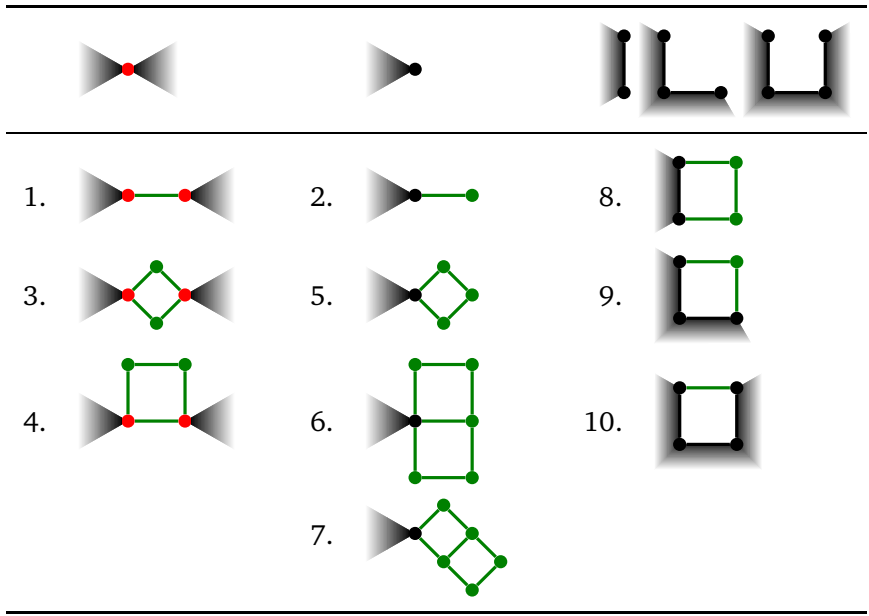

Given a predecoration $P$, we will choose a \emph{canonical parent} of $P$. This is a predecoration obtained by applying one of the reductions to $P$. We will always use the reduction with the smallest number among all possible reductions. It is possible that there is more than one way to apply this reduction to $P$, and if $P$ has non-trivial symmetry, some of these can result in the same parent. If we choose one special edge in the subgraph that is affected by the reduction operation, each way to apply this reduction corresponds to an edge of $P$. We can choose an orbit of edges under the symmetry group of $P$ by constructing a canonical labeling of the vertices -- similar to \cite{plantri} -- and choosing the orbit of the edge with the lowest numbered vertices. The canonical parent of $P$ is then obtained by applying the corresponding reduction.

During the construction, we will try each possible extension in all possible ways, and then check if it is the inverse of the reduction used to get the canonical parent of the resulting predecoration. If that is the case, we can continue to extend this predecoration.

It is possible to construct all predecorations with fewer extensions, but it is important that a canonical reduction always results in a valid predecoration. The order of extensions 1--4 ensures that a canonical reduction never increases $n_A$, and extensions 5--7 ensure that a canonical reduction never increases $n_A + n_B + n_C$. Extensions 8--10 are necessary when none of the other reductions are possible, so that each predecoration different from the base decorations has a possible reduction. We will prove this in \cref{lemma:reduction,thm:predecorations}.

\begin{lemma}\label{lemma:extension}
  An extension applied to a predecoration results in another predecoration if it keeps $n_A \leq 2$ and $n_A + n_B + n_C \leq 3$. Only extensions 1, 2 and 5 possibly violate this condition.
\end{lemma}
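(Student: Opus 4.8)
The plan is to read the defining conditions of a predecoration off \cref{lemma:predecorations} and verify each of its three properties for the output of every extension. The first observation is that properties~1 and~2 (all inner faces quadrangles, all inner vertices of degree at least~3) are preserved by \emph{every} extension, which is precisely why the statement guards only property~3 with the hypothesis $n_A \le 2$ and $n_A + n_B + n_C \le 3$. For property~1 I would inspect \cref{fig:extensions} case by case: each extension either glues a fresh quadrangle (bounded by new type-1 edges together with old boundary edges) onto the outer face, or, in the single case of extension~10, closes an already present quadrilateral with one new edge; in no case is a non-quadrangular inner face produced. For property~2, any vertex that an extension moves into the interior is incident both to the surrounding quadrangle(s) and to the edges running into the adjacent shaded regions, and since every shaded region contains at least one vertex, such a vertex ends up with degree at least~3.

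The core of the argument is then property~3, for which I would track how each extension changes the three quantities $n_A$, $n_B$, $n_C$. The guiding principle is that an extension attaches to shaded regions, so the anchor vertices only \emph{gain} degree and cannot become new witnesses for any of the three counts; the only danger comes from the genuinely new vertices and quadrangles that the extension introduces. For extensions~3, 4, 6 and~7 I would show that the split or anchor vertices acquire degree at least~3 from the attached quadrangle(s), while the new type-1 vertices have degree~2 but each lies in a quadrangle with \emph{exactly two} degree-2 corners (the other two being high-degree anchor vertices), so this is not the three-corner configuration counted by $n_C$ and drawn in \cref{fig:nC}. Hence no new pendant vertex and no new $n_C$-quadrangle appears, and none of $n_A$, $n_B$, $n_C$ increases. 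For extensions~8, 9 and~10 the same inspection shows that adding the new path or edge only raises the degrees of the anchor vertices and places any new degree-2 vertex inside a quadrangle that already has two high-degree corners, so again the counts cannot go up. These seven extensions therefore always output a predecoration, establishing the second sentence.

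Finally I would exhibit why extensions~1, 2 and~5 are the genuine exceptions. Extension~2 appends a real pendant vertex, adding one to $n_A$ or to $n_B$ and so possibly breaking $n_A + n_B + n_C \le 3$; extension~5 attaches a quadrangle in which three of the four corners have degree~2 (only the anchor has higher degree), creating a new configuration of the kind counted by $n_C$ and again threatening that bound; and extension~1 leaves its two split vertices at degree~2 whenever the two regions attach by a single edge, which can turn a non-degree-2 neighbour of an existing pendant into a degree-2 neighbour, moving that pendant out of $n_B$ and into $n_A$ and so threatening $n_A \le 2$.

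The step I expect to be the main obstacle is the bookkeeping for the three splitting extensions~1, 3 and~4 together with~8, 9 and~10, where the degrees of existing vertices are redistributed: here I must ensure that no unanticipated degree-2 corner or pendant is produced, and in particular must confirm that extensions~3 and~4 always force their split vertices up to degree at least~3 (so the quadrangle they create never has three degree-2 corners), isolating extensions~1, 2 and~5 as the only ones capable of raising a count.
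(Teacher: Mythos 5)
Your proposal is correct and takes essentially the same route as the paper's proof: a case-by-case inspection of the ten extensions confirming that quadrangular inner faces and inner degrees at least~3 are always preserved, and that only extensions~1 and~2 can raise $n_A$, only extension~2 can raise $n_B$, and only extension~5 can raise $n_C$. The paper compresses this entire analysis into three sentences, so your version merely supplies the degree- and corner-counting details that it leaves to the reader.
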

\begin{proof}
  It is easy to see that each extension can only create new inner faces that are quadrangles, and inner vertices with degree at least 3.

  The only extensions that can increase $n_A$ are extensions 1 and 2. The only extension that can increase $n_B$ is extension 2. The only extension that can increase $n_C$ is extension 5.
\end{proof}

This makes it easier to keep count of $n_A$, $n_B$ and $n_C$ during the construction.

\begin{lemma}\label{lemma:reduction}
  Let $P$ be a predecoration different from the base predecorations. By applying one of the reductions from \cref{fig:extensions}, $P$ can be reduced to a graph containing fewer vertices or a graph containing the same number of vertices but fewer edges.

  Furthermore, if we apply the reduction with the smallest number among all possible reductions, the resulting graph is again a predecoration.
\end{lemma}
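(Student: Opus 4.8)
The plan is to prove the two assertions separately. For the first, I would observe that every extension in \cref{fig:extensions} either adds at least one new vertex (extensions 1--9) or adds a single new edge and no vertex (extension 10). Hence every reduction strictly decreases the pair (number of vertices, number of edges) in the lexicographic order, which is exactly the size reduction claimed. It then remains to prove \emph{completeness}: that at least one of the ten reductions applies to every predecoration $P \notin \{K_2, C_4\}$.

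For completeness I would argue by a case analysis driven by the boundary of $P$. First I would treat the predecorations carrying a degree-1 vertex or a quadrangle with three degree-2 vertices; these are exactly the patterns that the inverses of extensions 1--7 remove, by merging two adjacent boundary vertices, deleting a pendant vertex, or collapsing a degree-2-heavy quadrangle. When none of reductions 1--7 applies, I would show that $P$ still contains a quadrangular inner face meeting the outer boundary along a path of one, two or three consecutive edges; such an \emph{ear} can then be opened by the inverse of extension 8, 9 or 10 respectively, by deleting the two, respectively one, vertices of the quadrangle lying strictly on the boundary, or, in the three-edge case, the single edge that closes it. The existence of such an ear follows from planarity together with properties 1 and 2 of \cref{lemma:predecorations}: a finite quadrangulated disc all of whose inner vertices have degree $\ge 3$ must carry a reducible quadrangle on its outer boundary. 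I expect this fallback --- proving that the failure of reductions 1--7 forces an 8--10 configuration --- to be the main obstacle, since it is the one point where the global plane structure, rather than a local pattern match, is needed.

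For the second assertion I would verify the three defining properties for $P'$, the graph produced by the smallest-numbered applicable reduction. Properties 1 and 2 are immediate and local: inspecting \cref{fig:extensions} shows that each extension only ever creates quadrangular inner faces and inner vertices of degree $\ge 3$, so no inverse reduction can destroy a quadrangle or push an inner degree below 3. The subtle property is property 3, the bounds $n_A \le 2$ and $n_A + n_B + n_C \le 3$. Here the numbering is essential. By \cref{lemma:extension} reductions 1 and 2 cannot increase $n_A$ and reduction 5 cannot increase $n_C$, being the inverses of the only extensions that can raise these counts, so these never cause a violation. For the remaining reductions I would show that if reduction $j$ raised $n_A$ (respectively the sum $n_A + n_B + n_C$), then the newly created degree-1 vertex or degree-2-heavy quadrangle would already expose an applicable reduction of smaller number among 1--4 (respectively 5--7), contradicting the choice of $j$ as the smallest applicable reduction. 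Consequently the canonical reduction never increases $n_A$, nor $n_A + n_B + n_C$, beyond their values in $P$, both bounds survive, and $P'$ is again a predecoration.

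Together these give the lemma: some reduction exists and strictly shrinks $P$, and the smallest-numbered one lands back in the class of predecorations. As noted, the chief difficulty is the completeness of the case analysis, and specifically the argument that exhausting reductions 1--7 always leaves an ear that one of reductions 8--10 can open.
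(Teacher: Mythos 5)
Your overall architecture matches the paper's (exhibit one applicable reduction, then check that the smallest-numbered applicable reduction preserves the predecoration properties), but both halves have gaps. For existence, you have inverted the difficulty. You do not need to exhaust reductions 1--7 before looking for an ``ear'', and the fallback you flag as the main obstacle is in fact a two-line observation: if $P$ has no inner face it is a tree and a pendant reduction applies; otherwise not every edge of the outer boundary can be a bridge (else $P$ would be a tree), so some quadrangular inner face has an edge in the outer face, and since $P \neq C_4$ there is at least one vertex outside that quadrangle, so reduction 10 applies. This is exactly the paper's argument, and it uses neither the inner-degree condition nor the prior failure of reductions 1--7. As written, your proposal leaves its own self-identified key step unproven, and proves in its place an elaborate case split that is not needed.

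The second half contains a genuine logical error. You invoke \cref{lemma:extension} to conclude that reductions 1 and 2 cannot increase $n_A$ and reduction 5 cannot increase $n_C$; but that lemma says which \emph{extensions} can increase these counts, which only tells you which \emph{reductions} can decrease them --- it says nothing about which reductions can increase them. In fact reduction 2 can increase $n_A$ (deleting a pendant vertex can drop its neighbour's degree to 2, promoting another pendant neighbour from the $n_B$ count to the $n_A$ count) and reduction 5 can increase $n_C$; the paper's \cref{tab:reductions} records exactly these cases. So reductions 2 and 5 must be fed into your ``a smaller-numbered reduction would also apply'' argument rather than exempted from it, and the witnessing smaller reduction is not always among 1--4 or 5--7 (for instance, reduction 10 increasing $n_A$ forces reduction 2 or reduction 9 to be applicable). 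More importantly, two cases are not resolved by that mechanism at all: reduction 1 can increase $n_B$ and reduction 2 can increase $n_C$ even when they are the smallest applicable reduction. These are harmless only because the increase is exactly offset by a decrease of $n_A$ (respectively $n_B$), so that $n_A + n_B + n_C \le 3$ is preserved while the individual counts move. Your proposal has no compensation argument of this kind, so as it stands the bound on the sum is not established.
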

\begin{proof}
  For the first part, it is clear that each reduction results in a `smaller' graph, so we only need to verify that at least one reduction can be applied. If $P$ contains at least one quadrangle, there is at least one quadrangle $Q$ with an edge in the outer face. Since $P$ is not $C_4$, there is at least one other vertex not contained in $Q$ in the graph, and reduction 10 is possible. If there is no quadrangle in $P$, reduction 1 is possible.

  For the second part, it is immediately clear that all reductions preserve the properties that all inner faces are quadrangles and that all inner vertices have degree at least 3. It remains to be proven that for the new graph $n_A \leq 2$ and $n_A + n_B + n_C \leq 3$.

  Some reductions can increase $n_A$, $n_B$ or $n_C$, but only if another reduction with a smaller number can also be applied. This is the reason that we need so many extension operations in that particular order. In \cref{tab:reductions}, all these situations are given.

  \begin{table}\label{tab:reductions}
    \centering
    \begin{tabular}{cccc}
      \toprule
      reduction & $n_A$ & $n_B$ & $n_C$ \\
      \midrule
      1     &     & $n_A$ &       \\
      2     & 1   & 1     & $n_B$ \\
      3,4   & 1   &       &      \\
      5,6,7 & 1   & 1     & 3/4  \\
      8     & 2/5 & 5     & 6/7  \\
      9     & 2/8 & 8     & 8    \\
      10    & 2/9 & 9     & 9    \\
      \bottomrule
    \end{tabular}
    \caption[Table with possible reductions.]{Read this table as: \\[0.5\baselineskip]
    \begin{minipage}{\linewidth}
      \centering\small
      Reduction $i$ can increase $n_X$, but only if $n_Y$ is decreased by the same amount. \\
      Reduction $i$ can increase $n_X$, but only if reduction $j/k$ can be applied too.
    \end{minipage}
    }
  \end{table}

  It is impossible to increase $n_A$ with a reduction that has the smallest possible number. Therefore, we still have $n_A \leq 2$ in the new graph.

  Reduction 1 can increase $n_B$, but only by removing a vertex of degree 2 neighbouring a vertex of degree 1, \ie by decreasing $n_A$ by the same amount. Therefore, we still have $n_A + n_B + n_C \leq 3$ in the new graph.

  Reduction 2 can increase $n_C$, but only by decreasing $n_B$ by the same amount. Therefore, we still have $n_A + n_B + n_C \leq 3$ in the new graph.

\end{proof}

\begin{algorithm}[htp]
	\begin{algorithmic}[0]
    \Function{Extend}{$P$}
      \State \textbf{output} $P$
      \For{$i = 1, \dotsc, 10$}
        \For{$O$ an orbit of edges in the outer face of $P$}
          \State $e \gets \text{edge in } O$
          \State $P' \gets \text{apply extension $i$ to edge $e$ of $P$}$
          \If{$P$ canonical parent of $P'$}
            \State $\textsc{Extend}(P')$
          \EndIf
        \EndFor
      \EndFor
    \EndFunction
    \For{$G$ a base predecoration}
      \State $\textsc{Extend}(P)$
    \EndFor
	\end{algorithmic}
	\caption{Construction of predecorations}\label{alg:predeco}
\end{algorithm}

\begin{theorem}\label{thm:predecorations}
  The algorithm described in \cref{alg:predeco} generates all predecorations.
\end{theorem}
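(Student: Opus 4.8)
The plan is to prove the statement by induction on the \emph{size} of a predecoration, using the canonical construction path method of \cite{McKay1998}, where size is measured first by the number of vertices and then, as a tie-breaker, by the number of edges. The claim \enquote{generates all predecorations} has two halves. Soundness---that every output is in fact a predecoration---is the easy half: a graph is retained only when the canonical-parent test succeeds, and that test is defined only for inputs satisfying the conditions of \cref{lemma:predecorations}, while \cref{lemma:extension} pins down exactly when an extension preserves those conditions. The substantive half is completeness: every predecoration occurs as an output of \cref{alg:predeco}. The base of the induction is immediate, since the final loop calls $\textsc{Extend}$ on the base predecorations $K_2$ and $C_4$, each of which is output on entry.

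For the inductive step I would fix a predecoration $P$ different from $K_2$ and $C_4$. By \cref{lemma:reduction} at least one of the reductions of \cref{fig:extensions} applies to $P$; taking the reduction with the smallest number, and among its possible applications the canonical edge orbit selected by a canonical labelling, yields the canonical parent $Q$ of $P$. The same lemma guarantees that $Q$ is again a predecoration and that it is strictly smaller than $P$ in the chosen order (fewer vertices, or the same number of vertices but fewer edges). Hence the induction hypothesis applies to $Q$, so the call $\textsc{Extend}(Q)$ occurs during the run of the algorithm.

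It then remains to trace what happens inside $\textsc{Extend}(Q)$. The reduction carrying $P$ to $Q$ is, by construction, the inverse of one of the ten extensions, say extension $i$, applied at a particular edge of $Q$. Because the nested loops of \cref{alg:predeco} range over all extension numbers $i = 1, \dotsc, 10$ and over all orbits of edges in the outer face of $Q$, the algorithm eventually applies extension $i$ to an edge in the orbit matching that application, producing a graph isomorphic to $P$. At that point the test whether $Q$ is the canonical parent of this graph succeeds---precisely because $Q$ was chosen as the canonical parent of $P$, and the canonical-parent relation depends only on the isomorphism class---so $\textsc{Extend}(P)$ is invoked and $P$ is output. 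This closes the induction.

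The main obstacle lies in the bookkeeping behind the previous paragraph: one must verify that each extension in \cref{fig:extensions} really is the inverse of the reduction bearing the same number, in the strong sense that applying the extension to the marked edge of $Q$ reconstructs $P$ up to isomorphism (not merely \emph{some} predecoration that reduces to $Q$), and that the edge marked by the reduction always lies in the outer face, over whose orbits the algorithm iterates. Once this edge-to-application correspondence is in place---so that undoing the canonical reduction of $P$ is literally among the operations tried at $Q$---the canonical construction path framework yields more than the stated claim: since every non-base predecoration has a unique canonical parent strictly below it, and a child is kept only when reached through that parent, each isomorphism class of predecorations is in fact generated exactly once.
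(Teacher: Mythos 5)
Your proposal is correct and follows essentially the same route as the paper, which simply cites the canonical construction path method of \cite{McKay1998} together with \cref{lemma:reduction}; you have merely unfolded that one-line argument into the standard induction on size, with the base predecorations as base case and \cref{lemma:reduction} supplying both the existence of a strictly smaller canonical parent and the fact that it is again a predecoration. The bookkeeping points you flag (extensions being exact inverses of the reductions, the marked edge lying in the outer face) are exactly what the paper leaves implicit in its figure and setup, so no genuinely different idea is involved.
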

\begin{proof}
  This follows immediately from \cite{McKay1998} and \cref{lemma:reduction}.
\end{proof}

\section{Construction of decorations}

Now that we can construct all predecorations, we can use the homomorphism principle \cite{Gruner1997} and complete each predecoration in all possible ways to get all $k$-decorations with \cref{alg:deco}. We first have to compute the symmetry group of the predecoration, in order to avoid completions that result in the same decoration. After the first 4 steps, all symmetry is broken by choosing $v_0$, $v_1$ and $v_2$.

\begin{algorithm}[htp]
  \begin{enumerate}
    \item If $n_A > 0$, label the corresponding vertices of degree 1 with $v_0$ or $v_2$ in all non-isomorphic ways.
    \item If $n_B + n_C > 0$, label the corresponding vertices with $v_0$, $v_1$ or $v_2$ in all non-isomorphic ways.
    \item If $v_1$ is not yet chosen, label an outer vertex with $v_1$ or add a new type-1 vertex $v_1$ of degree 2 in the outer face in all non-isomorphic ways.
    \item If $v_0$ or $v_2$ is not yet chosen, label two outer vertices with $v_0$ and $v_2$ in all non-isomorphic ways.
    \item Fill all inner quadrangles with a type-1 vertex.
    \item \label{step:6} Add type-1 vertices of degree 3 in the outer face in all possible ways, such that there are no cut-vertices or vertices of degree 2 left.
    \item \label{step:7} Check whether the result is a $k$-decoration.
  \end{enumerate}
	\caption{Complete a predecoration in all possible ways}\label{alg:deco}
\end{algorithm}

We do not have to take isomorphisms into account, since two isomorphic decorations will have isomorphic predecorations.

Note that it might not be possible to complete a predecoration in Step \ref{step:6} such that there are no cut-vertices left.

\subsection{Connectivity}

In Step \ref{step:7}, we will always obtain a decoration. The additional properties for 2\dash connected decorations and 3\dash connected decorations have to be checked. The properties in the outer face cannot be checked earlier in the construction process, because they depend on the chosen completion. But we can prevent type-1 cycles of length 2 and cycles of length 4 during the construction. It is clear that once a type-1 cycle is created during the construction, it cannot be destroyed later. So we only have to avoid the creation of the first type-1 cycle of length 2 or 4.

The only way to create a first type-1 cycle of length 2 is by applying extension 10 to a predecoration with an outer face of size 4. This can easily be avoided. The only way to create a non-empty type-1 cycle of length 4 is by applying extension 10 to a predecoration with an outer face of size 6. We can avoid this too.

To check the other properties after the completion, we can loop over the outer face of the decoration, and mark all vertices one inner edge away from side~$i$ with~$i$. If we encounter a vertex on side $i$ that is marked with $i$, the decoration is not 2\dash connected. If a vertex is marked two times with the same number, or a vertex on side 1 is marked with 0 or vice versa, the decoration is not 3\dash connected.

\subsection{Inflation rate}

As mentioned in \cite{Brinkmann2017}, the impact of an operation on the size of a polyhedron can be measured by the \emph{inflation rate}. This is the ratio of the number of edges before and after the operation, and is equal to the number of chambers in the decoration.

Although it is interesting to construct all possible decorations, we are more interested in the decorations with a given inflation rate. Unfortunately, we cannot determine the inflation rate before the predecoration is completed as decorations with different inflation rates might have the same predecoration, but we can compute lower and upper bounds.

Given a predecoration $P$, for each decoration that has $P$ as its underlying predecoration, each quadrangle of $P$ corresponds to 4 chambers and each cut-vertex of which the removal leaves $k \geq 2$ components requires $2 (k - 1)$ extra chambers. So
\begin{equation}
  4 \cdot (\text{number of quadrangles}) + 2 \cdot \sum_{\text{cut-vertices}} (\text{occurences in outer face} - 1)
\end{equation}
is a lower bound for the inflation rate. The maximal inflation rate of a predecoration is reached by adding as much type-1 vertices as possible in the outer face. This will result in exactly one chamber for each edge in the outer face. In combination with the 4 chambers in each quadrangle, this results in 2 chambers (one at each side) for each edge of the predecoration. So the maximal inflation rate is
\begin{equation}
  2 \cdot (\text{number of edges}).
\end{equation}

If the lower bound for the inflation rate of a predecoration is already higher than the desired inflation rate, we do not have to extend it further as it can only increase. If the upper bound is lower than the desired inflation rate, we have to extend it, but we do not have to try to complete it.

\section{Results}

Using \cref{alg:predeco,alg:deco}, we implemented a computer program \cite{decogen} to generate all $k$-decorations with a given inflation rate. The results of this program are given in \cref{tbl:decogen}. The decorations for inflation rates $r \leq 8$ are given in \cref{tbl:examples}.

The two lsp operations with inflation rate 1 are obviously identity and dual. The lsp operations with inflation rate 2 are ambo and join, and the ones with inflation rate 3 are truncate, zip, needle and kiss. Up to here, all lsp operations were already described by Conway \cite{conway} or others. For the left decoration with inflation rate 4, only two of the 4 related lsp operations (chamfer and subdivide) are already named. The first decoration for which none of the related lsp operations (including dual and mirrored ones) are already named, is the 2\dash connected lsp operation with inflation rate 5. The first unnamed 3\dash connected lsp operations are the three leftmost decorations with inflation rate 6.

These results are verified for inflation rate up to 23 by an independent implementation that constructs all triangulations, filters the decorations out, applies them to a polyhedron, checks the connectivity and filters the isomorphic ones out.

\begin{table}
  \small
  \centering
  \begin{tabular}{crrrr}
    \toprule
    & \multicolumn{3}{c}{$k$\dash connected decorations} & \\
    \cmidrule{2-4}
    inflation rate & \multicolumn{1}{c}{$k = 1$} & \multicolumn{1}{c}{$k = 2$} & \multicolumn{1}{c}{$k = 3$} & \multicolumn{1}{c}{predecorations} \\
    \midrule
    1 & 2 & 2 & 2 & 1 \\
    2 & 2 & 2 & 2 & 1 \\
    3 & 4 & 4 & 4 & 1 \\
    4 & 6 & 6 & 6 & 2 \\
    5 & 6 & 6 & 4 & 2 \\
    6 & 20 & 20 & 20 & 4 \\
    7 & 28 & 28 & 20 & 7 \\
    8 & 58 & 58 & 54 & 8 \\
    9 & 82 & 82 & 64 & 7 \\
    10 & 170 & 168 & 144 & 19 \\
    11 & 204 & 200 & 132 & 16 \\
    12 & 496 & 492 & 404 & 50 \\
    13 & 650 & 640 & 396 & 42 \\
    14 & 1432 & 1400 & 1112 & 118 \\
    15 & 1824 & 1786 & 1100 & 109 \\
    16 & 4114 & 3952 & 2958 & 298 \\
    17 & 5078 & 4900 & 2769 & 300 \\
    18 & 11874 & 11150 & 7972 & 749 \\
    19 & 14808 & 14058 & 7560 & 782 \\
    20 & 33978 & 30998 & 21300 & 1902 \\
    21 & 41794 & 38964 & 20076 & 2056 \\
    22 & 97096 & 85976 & 56296 & 4893 \\
    23 & 118572 & 107784 & 52380 & 5419 \\
    24 & 277208 & 237482 & 148956 & 12615 \\
    25 & 337216 & 298546 & 138384 & 14153 \\
    26 & 788342 & 652236 & 392096 & 32665 \\
    27 & 953060 & 820960 & 362499 & 36953 \\
    28 & 2239396 & 1786222 & 1027488 & 84853 \\
    29 & 2697088 & 2250816 & 945612 & 96491 \\
    30 & 6350014 & 4875076 & 2687408 & 220646 \\
    31 & 7618068 & 6153604 & 2466156 & 251104 \\
    32 & 17972390 & 13262574 & 7007118 & 573547 \\
    33 & 21487746 & 16773086 & 6409664 & 654663 \\
    34 & 50805716 & 35985748 & 18222032 & 1491540 \\
    35 & 60573248 & 45592594 & 16623268 & 1706755 \\
    36 & 143425040 & 97394726 & 47287986 & 3878836 \\
    37 & 170530518 & 123628298 & 43038260 & 4446426 \\
    38 & 404413576 & 262983002 & 122451618 & 10085305 \\
    39 & 479711448 & 334473144 & 111200316 & 11582891 \\
    40 & 1139138344 & 708583784 & 316474370 & 26222191 \\
    \bottomrule
  \end{tabular}
  \caption{The number of $k$-connected decorations up to inflation rate 40. The number of predecorations that can be completed to a decoration with given inflation rate are given too. Not all of these predecorations are constructed for 2-connected or 3-connected decorations.}\label{tbl:decogen}
\end{table}

\begin{table}\label{tbl:examples}
  \newcommand{\Lcomplete}[2]{\draw[edge,thick] (#1) -- (0,0) -- (#2);}
  \newcommand{\Tcomplete}[3]{\draw[edge,thick] (#1) -- (#3) (#2) -- ($(#1)!(#2)!(#3)$);}
  \newcommand{\Xcomplete}[4]{\draw[edge,thick] (#1) -- (#3) (#2) -- (#4);}
  \newcommand{\symmetric}{\node[fill,star,inner sep=0,minimum size=6,star point ratio=2.5] at (2.75,2.75) {};}
  \begin{tabular}{cll}
    \toprule
    inflation rate & $k = 2$ & $k = 3$ \\
    \midrule \addlinespace
    1 & &
    \begin{tikzpicture}[scale=0.3,baseline=2.5ex]
      \symmetric
      \coordinate (1) at (4,0);
      \coordinate (2) at (0,4);
      \Lcomplete 1 2
      \draw[edge1,very thick] (1) -- (2);
    \end{tikzpicture} \\ \addlinespace
    2 & &
    \begin{tikzpicture}[scale=0.3,baseline=2.5ex]
      \symmetric
      \coordinate (1) at (4,0);
      \coordinate (2) at (0,0);
      \coordinate (3) at (0,4);
      \Tcomplete 1 2 3
      \draw[edge1,very thick] (1) -- (2) -- (3);
    \end{tikzpicture} \\ \addlinespace
    3 & &
    \begin{tikzpicture}[scale=0.3,baseline=2.5ex]
      \coordinate (1) at (4,0);
      \coordinate (2) at (2,0);
      \coordinate (3) at (0,4);
      \Lcomplete 2 3
      \Tcomplete 1 2 3
      \draw[edge1,very thick] (1) -- (2) -- (3);
    \end{tikzpicture} \\ \addlinespace
    4 & &
    \begin{tikzpicture}[scale=0.3,baseline=2.5ex]
      \coordinate (1) at (4,0);
      \coordinate (2) at ($(4,0)!0.3!(0,4)$);
      \coordinate (3) at (0,0);
      \coordinate (4) at (0,4);
      \Tcomplete 1 2 3
      \Tcomplete 2 3 4
      \draw[edge1,very thick] (1) -- (2) -- (3) -- (4);
    \end{tikzpicture}
    \begin{tikzpicture}[scale=0.3,baseline=2.5ex]
      \symmetric
      \coordinate (1) at (4,0);
      \coordinate (2) at (2,2);
      \coordinate (3) at (0,0);
      \coordinate (4) at (0,4);
      \Tcomplete 1 2 3
      \Tcomplete 3 2 4
      \draw[edge1,very thick] (1) -- (4) (2) -- (3);
    \end{tikzpicture} \\ \addlinespace
    5 &
    \begin{tikzpicture}[scale=0.3,baseline=2.5ex]
      \symmetric
      \coordinate (1) at (1.5,0);
      \coordinate (2) at (0,1.5);
      \coordinate (3) at (4,0);
      \coordinate (4) at (0,4);
      \Lcomplete 1 2
      \Xcomplete 1 2 4 3
      \draw[edge1,very thick] (1) -- (2) -- (4) -- (3) -- (1);
    \end{tikzpicture} &
    \begin{tikzpicture}[scale=0.3,baseline=2.5ex]
      \coordinate (1) at (4,0);
      \coordinate (2) at (3,1);
      \coordinate (3) at (1,0);
      \coordinate (4) at (0,4);
      \Lcomplete 3 4
      \Tcomplete 1 2 3
      \Tcomplete 2 3 4
      \draw[edge1,very thick] (1) -- (2) -- (3) -- (4);
    \end{tikzpicture} \\ \addlinespace
    6 & &
    \begin{tikzpicture}[scale=0.3,baseline=2.5ex]
      \coordinate (1) at (4,0);
      \coordinate (2) at (3,0);
      \coordinate (3) at ($(4,0)!0.4!(0,4)$);
      \coordinate (4) at (0,0);
      \coordinate (5) at (0,4);
      \Tcomplete 1 2 3
      \Tcomplete 2 3 4
      \Tcomplete 3 4 5
      \draw[edge1,very thick] (1) -- (2) -- (3) -- (4) -- (5);
    \end{tikzpicture}
    \begin{tikzpicture}[scale=0.3,baseline=2.5ex]
      \symmetric
      \coordinate (1) at (4,0);
      \coordinate (2) at ($(4,0)!0.3!(0,4)$);
      \coordinate (3) at (0,0);
      \coordinate (4) at ($(4,0)!0.7!(0,4)$);
      \coordinate (5) at (0,4);
      \Tcomplete 1 2 3
      \Tcomplete 2 3 4
      \Tcomplete 3 4 5
      \draw[edge1,very thick] (1) -- (2) -- (3) -- (4) -- (5);
    \end{tikzpicture}
    \begin{tikzpicture}[scale=0.3,baseline=2.5ex]
      \coordinate (1) at ($(4,0)!0.33!(0,4)$);
      \coordinate (2) at (2,0);
      \coordinate (3) at (0,4);
      \coordinate (4) at (4,0);
      \coordinate (5) at (0,0);
      \Tcomplete 2 1 4
      \Xcomplete 1 2 5 3
      \draw[edge1,very thick] (1) -- (2) -- (5) -- (3) -- (1) -- (4);
    \end{tikzpicture}
    \begin{tikzpicture}[scale=0.3,baseline=2.5ex]
      \symmetric
      \coordinate (1) at (4,0);
      \coordinate (2) at (1.33,1.33);
      \coordinate (3) at (0,0);
      \coordinate (4) at (0,4);
      \Tcomplete 1 2 3
      \Tcomplete 1 2 4
      \Tcomplete 3 2 4
      \draw[edge1,very thick] (1) -- (2) -- (3) (2) -- (4);
    \end{tikzpicture}
    \begin{tikzpicture}[scale=0.3,baseline=2.5ex]
      \coordinate (1) at (2.5,0);
      \coordinate (2) at (0,0);
      \coordinate (3) at (2.5,1.5);
      \coordinate (4) at (4,0);
      \coordinate (5) at (0,4);
      \Tcomplete 3 1 4
      \Xcomplete 1 2 5 3
      \draw[edge1,very thick] (1) -- (2) -- (5) -- (3) -- (1) -- (4);
    \end{tikzpicture}
    \begin{tikzpicture}[scale=0.3,baseline=2.5ex]
      \coordinate (1) at (0,0);
      \coordinate (2) at (3,1);
      \coordinate (3) at (0,2);
      \coordinate (4) at (4,0);
      \coordinate (5) at (0,4);
      \Tcomplete 1 2 3
      \Tcomplete 1 2 4
      \Tcomplete 2 3 5
      \draw[edge1,very thick] (1) -- (2) -- (4) (2) -- (3) -- (5);
    \end{tikzpicture} \\ \addlinespace
    7 &
    \begin{tikzpicture}[scale=0.3,baseline=2.5ex]
      \coordinate (1) at (3,0);
      \coordinate (2) at (1.5,0);
      \coordinate (3) at (0,4);
      \coordinate (4) at (4,0);
      \coordinate (5) at (0,1.5);
      \Tcomplete 3 1 4
      \Xcomplete 1 2 5 3
      \Lcomplete 2 5
      \draw[edge1,very thick] (1) -- (2) -- (5) -- (3) -- (1) -- (4);
    \end{tikzpicture}
    \begin{tikzpicture}[scale=0.3,baseline=2.5ex]
      \coordinate (1) at (0,4);
      \coordinate (2) at (2.5,0);
      \coordinate (3) at (0,1.25);
      \coordinate (4) at (4,0);
      \coordinate (5) at (1.25,0);
      \draw[edge1,very thick] (1) -- (2) -- (5) -- (3) -- (1) -- (4);
      \draw[edge] (2) -- (4) (1) -- (3.25,0);
      \Xcomplete 1 2 5 3
      \Lcomplete 3 5
    \end{tikzpicture} &
    \begin{tikzpicture}[scale=0.3,baseline=2.5ex]
      \coordinate (1) at (4,0);
      \coordinate (2) at (3.25,0);
      \coordinate (3) at ($(4,0)!0.3!(0,4)$);
      \coordinate (4) at (0.75,0);
      \coordinate (5) at (0,4);
      \Tcomplete 1 2 3
      \Tcomplete 2 3 4
      \Tcomplete 3 4 5
      \Lcomplete 4 5
      \draw[edge1,very thick] (1) -- (2) -- (3) -- (4) -- (5);
    \end{tikzpicture}
    \begin{tikzpicture}[scale=0.3,baseline=2.5ex]
      \coordinate (1) at (2,0);
      \coordinate (2) at (0,1);
      \coordinate (3) at (2,2);
      \coordinate (4) at (4,0);
      \coordinate (5) at (0,4);
      \Tcomplete 3 1 4
      \Xcomplete 1 2 5 3
      \Lcomplete 1 2
      \draw[edge1,very thick] (1) -- (2) -- (5) -- (3) -- (1) -- (4);
    \end{tikzpicture}
    \begin{tikzpicture}[scale=0.3,baseline=2.5ex]
      \coordinate (1) at (2.5,0);
      \coordinate (2) at (0.75,0);
      \coordinate (3) at ($(4,0)!0.33!(0,4)$);
      \coordinate (4) at (4,0);
      \coordinate (5) at (0,4);
      \Tcomplete 3 1 4
      \Xcomplete 1 2 5 3
      \Lcomplete 2 5
      \draw[edge1,very thick] (1) -- (2) -- (5) -- (3) -- (1) -- (4);
    \end{tikzpicture}
    \begin{tikzpicture}[scale=0.3,baseline=2.5ex]
      \coordinate (1) at (2,2);
      \coordinate (2) at (1,0);
      \coordinate (3) at (0,4);
      \coordinate (4) at (4,0);
      \coordinate (5) at (0,1.5);
      \Tcomplete 2 1 4
      \Xcomplete 1 2 5 3
      \Lcomplete 2 5
      \draw[edge1,very thick] (1) -- (2) -- (5) -- (3) -- (1) -- (4);
    \end{tikzpicture}
    \begin{tikzpicture}[scale=0.3,baseline=2.5ex]
      \coordinate (1) at ($(4,0)!0.3!(0,4)$);
      \coordinate (2) at (2,0);
      \coordinate (3) at (0,4);
      \coordinate (4) at (4,0);
      \coordinate (5) at (0.75,0);
      \Tcomplete 2 1 4
      \Xcomplete 1 2 5 3
      \Lcomplete 3 5
      \draw[edge1,very thick] (1) -- (2) -- (5) -- (3) -- (1) -- (4);
    \end{tikzpicture} \\ \addlinespace
    8 &
    \begin{tikzpicture}[scale=0.3,baseline=2.5ex]
      \coordinate (1) at (1,1);
      \coordinate (2) at (0,4);
      \coordinate (3) at (2,0);
      \coordinate (4) at (0,0);
      \coordinate (5) at (4,0);
      \Tcomplete 2 1 4
      \Tcomplete 3 1 4
      \Xcomplete 1 2 5 3
      \draw[edge1,very thick] (1) -- (2) -- (5) -- (3) -- (1) -- (4);
    \end{tikzpicture} &
    \begin{tikzpicture}[scale=0.3,baseline=2.5ex]
      \coordinate (1) at (4,0);
      \coordinate (2) at ($(4,0)!0.1!(0,4)$);
      \coordinate (3) at (2.75,0);
      \coordinate (4) at ($(4,0)!0.42!(0,4)$);
      \coordinate (5) at (0,0);
      \coordinate (6) at (0,4);
      \Tcomplete 1 2 3
      \Tcomplete 2 3 4
      \Tcomplete 3 4 5
      \Tcomplete 4 5 6
      \draw[edge1,very thick] (1) -- (2) -- (3) -- (4) -- (5) -- (6);
    \end{tikzpicture}
    \begin{tikzpicture}[scale=0.3,baseline=2.5ex]
      \coordinate (1) at (4,0);
      \coordinate (2) at (3,0);
      \coordinate (3) at ($(4,0)!0.4!(0,4)$);
      \coordinate (4) at (0,0);
      \coordinate (5) at ($(4,0)!0.75!(0,4)$);
      \coordinate (6) at (0,4);
      \Tcomplete 1 2 3
      \Tcomplete 2 3 4
      \Tcomplete 3 4 5
      \Tcomplete 4 5 6
      \draw[edge1,very thick] (1) -- (2) -- (3) -- (4) -- (5) -- (6);
    \end{tikzpicture}
    \begin{tikzpicture}[scale=0.3,baseline=2.5ex]
      \coordinate (1) at (4,0);
      \coordinate (2) at ($(4,0)!0.2!(0,4)$);
      \coordinate (3) at (0,0);
      \coordinate (4) at (0,1.5);
      \coordinate (5) at ($(4,0)!0.8!(0,4)$);
      \coordinate (6) at (0,4);
      \Tcomplete 1 2 3
      \Tcomplete 3 2 4
      \Tcomplete 2 4 5
      \Tcomplete 4 5 6
      \draw[edge1,very thick] (1) -- (2) -- (3) (2) -- (4) -- (5) -- (6);
    \end{tikzpicture}
    \begin{tikzpicture}[scale=0.3,baseline=2.5ex]
      \symmetric
      \coordinate (1) at (2,2);
      \coordinate (2) at (0,3);
      \coordinate (3) at (0,4);
      \coordinate (4) at (3,0);
      \coordinate (5) at (4,0);
      \coordinate (6) at (0,0);
      \Tcomplete 1 2 3
      \Tcomplete 1 4 5
      \Tcomplete 2 1 6
      \Tcomplete 4 1 6
      \draw[edge1,very thick] (3) -- (2) -- (1) -- (4) -- (5) (1) -- (6);
    \end{tikzpicture}
    \begin{tikzpicture}[scale=0.3,baseline=2.5ex]
      \coordinate (1) at (2,0);
      \coordinate (2) at (0,0);
      \coordinate (3) at ($(4,0)!0.35!(0,4)$);
      \coordinate (4) at ($(4,0)!0.15!(0,4)$);
      \coordinate (5) at (0,4);
      \coordinate (6) at (4,0);
      \Tcomplete 3 1 4
      \Tcomplete 1 4 6
      \Xcomplete 1 2 5 3
      \draw[edge1,very thick] (1) -- (2) -- (5) -- (3) -- (1) -- (4) -- (6);
    \end{tikzpicture}
    \begin{tikzpicture}[scale=0.3,baseline=2.5ex]
      \coordinate (1) at ($(4,0)!0.35!(0,4)$);
      \coordinate (2) at (2,0);
      \coordinate (3) at (0,4);
      \coordinate (4) at (3.1,0);
      \coordinate (5) at (0,0);
      \coordinate (6) at (4,0);
      \Tcomplete 2 1 4
      \Tcomplete 1 4 6
      \Xcomplete 1 2 5 3
      \draw[edge1,very thick] (1) -- (2) -- (5) -- (3) -- (1) -- (4) -- (6);
    \end{tikzpicture} \\ \addlinespace & &
    \begin{tikzpicture}[scale=0.3,baseline=2.5ex]
      \coordinate (1) at (0,1.25);
      \coordinate (2) at (0,0);
      \coordinate (3) at (1.25,0);
      \coordinate (4) at (2,2);
      \coordinate (5) at (0,4);
      \coordinate (6) at (4,0);
      \Tcomplete 4 1 5
      \Tcomplete 3 4 6
      \Xcomplete 1 2 3 4
      \draw[edge1,very thick] (1) -- (2) -- (3) -- (4) -- (1) -- (5) (4) -- (6);
    \end{tikzpicture}
    \begin{tikzpicture}[scale=0.3,baseline=2.5ex]
      \coordinate (1) at (3,1);
      \coordinate (2) at (2,0);
      \coordinate (3) at (0,0);
      \coordinate (4) at (1,3);
      \coordinate (5) at (4,0);
      \coordinate (6) at (0,4);
      \Tcomplete 2 1 5
      \Tcomplete 3 4 6
      \Xcomplete 1 2 3 4
      \draw[edge1,very thick] (1) -- (2) -- (3) -- (4) -- (1) -- (5) (4) -- (6);
    \end{tikzpicture}
    \begin{tikzpicture}[scale=0.3,baseline=2.5ex]
      \coordinate (1) at (2.8,0);
      \coordinate (2) at (0,0);
      \coordinate (3) at ($(4,0)!0.41!(0,4)$);
      \coordinate (4) at ($(4,0)!0.23!(0,4)$);
      \coordinate (5) at (4,0);
      \coordinate (6) at (0,4);
      \Tcomplete 4 1 5
      \Tcomplete 3 2 6
      \Xcomplete 1 2 3 4
      \draw[edge1,very thick] (1) -- (2) -- (3) -- (4) -- (1) -- (5) (2) -- (6);
    \end{tikzpicture}
    \begin{tikzpicture}[scale=0.3,baseline=2.5ex]
      \coordinate (1) at (0,1.8);
      \coordinate (2) at (0,0);
      \coordinate (3) at (1.25,0);
      \coordinate (4) at ($(4,0)!0.4!(0,4)$);
      \coordinate (5) at (0,4);
      \coordinate (6) at (4,0);
      \Tcomplete 4 1 5
      \Tcomplete 3 4 6
      \Xcomplete 1 2 3 4
      \draw[edge1,very thick] (1) -- (2) -- (3) -- (4) -- (1) -- (5) (4) -- (6);
    \end{tikzpicture}
    \begin{tikzpicture}[scale=0.3,baseline=2.5ex]
      \coordinate (1) at (2.65,0);
      \coordinate (2) at (0,0);
      \coordinate (3) at ($(4,0)!0.41!(0,4)$);
      \coordinate (4) at ($(4,0)!0.19!(0,4)$);
      \coordinate (5) at (4,0);
      \coordinate (6) at (0,4);
      \Tcomplete 1 4 5
      \Tcomplete 3 2 6
      \Xcomplete 1 2 3 4
      \draw[edge1,very thick] (1) -- (2) -- (3) -- (4) -- (1) (4) -- (5) (2) -- (6);
    \end{tikzpicture}
    \begin{tikzpicture}[scale=0.3,baseline=2.5ex]
      \symmetric
      \coordinate (1) at (2,0);
      \coordinate (2) at (0,0);
      \coordinate (3) at (0,2);
      \coordinate (4) at (2,2);
      \coordinate (5) at (4,0);
      \coordinate (6) at (0,4);
      \Tcomplete 4 1 5
      \Tcomplete 4 3 6
      \Xcomplete 1 2 3 4
      \draw[edge1,very thick] (1) -- (2) -- (3) -- (4) -- (1) -- (5) (3) -- (6);
    \end{tikzpicture} \\ \addlinespace & &
    \begin{tikzpicture}[scale=0.3,baseline=2.5ex]
      \symmetric
      \coordinate (1) at (2,2);
      \coordinate (2) at (1.25,0);
      \coordinate (3) at (0,0);
      \coordinate (4) at (0,1.25);
      \coordinate (5) at (4,0);
      \coordinate (6) at (0,4);
      \Tcomplete 2 1 5
      \Tcomplete 4 1 6
      \Xcomplete 1 2 3 4
      \draw[edge1,very thick] (6) -- (1) -- (2) -- (3) -- (4) -- (1) -- (5);
    \end{tikzpicture}
    \begin{tikzpicture}[scale=0.3,baseline=2.5ex]
      \coordinate (1) at (2,0.66);
      \coordinate (2) at (1.33,0);
      \coordinate (3) at (0,0);
      \coordinate (4) at (0,4);
      \coordinate (5) at (4,0);
      \Tcomplete 2 1 5
      \Tcomplete 4 1 5
      \Xcomplete 1 2 3 4
      \draw[edge1,very thick] (1) -- (2) -- (3) -- (4) -- (1) -- (5);
    \end{tikzpicture}
    \begin{tikzpicture}[scale=0.3,baseline=2.5ex]
      \coordinate (1) at (2.5,0.5);
      \coordinate (2) at (0,0);
      \coordinate (3) at (0,4);
      \coordinate (4) at ($(4,0)!0.4!(0,4)$);
      \coordinate (5) at (4,0);
      \Tcomplete 2 1 5
      \Tcomplete 4 1 5
      \Xcomplete 1 2 3 4
      \draw[edge1,very thick] (1) -- (2) -- (3) -- (4) -- (1) -- (5);
    \end{tikzpicture} \\ \addlinespace
    \bottomrule
  \end{tabular}
  \caption{All decorations with inflation rate up to 8. The \iftoggle{bw}{dashed}{green} lines are edges of type 1. The \iftoggle{bw}{solid}{black} lines are edges of type 0 and 2. For each of the given decorations, the edges of type 0 and 2 can be chosen in two different ways. All decorations except the symmetric ones (marked with a star) can be mirrored. So each starred decoration represents two related lsp operations, and the unstarred ones represent four related lsp operations.}
\end{table}

\clearpage

\nocite{*}
\printbibliography

\end{document}